\newtheorem{theorem}{Theorem}
\newtheorem{lemma}{Lemma}
\newtheorem{corollary}{Corollary}
\newtheorem{conjecture}{Conjecture}
\newtheorem{observation}{Observation}
\newcommand{\per}{{\rm per}}
\newcommand{\pnz}{{\rm permanent-non-singular }}
\newenvironment{proof}{
\par
\noindent {\bf Proof.}\rm}{\mbox{}\hfill\rule{0.5em}{0.809em}\par}
\begin{document}
\title{Total weight choosability for Halin graphs}
\author{ Yu-Chang Liang \thanks{Department of Applied Mathematics, National Pingtung University,  Pingtung,
   Taiwan 90003. Grant number: MOST 105-2811-M-153-001. Email: chase2369219@hotmail.com}
   \and Tsai-Lien Wong \thanks{Department
of Applied Mathematics, National Sun Yat-sen University, Kaohsiung,
Taiwan 80424. Grant numbers: 105-2918-I-110-003. Email:
tlwong@math.nsysu.edu.tw}
\and Xuding Zhu
\thanks{Department of Mathematics, Zhejiang Normal University,
China. Grant numbers:
  NSF11171310 and ZJNSF  Z6110786.  Email: xudingzhu@gmail.com. }
        \\[0.2cm]
       }

\date{\today}

\maketitle
\begin{abstract}
A proper total weighting of a graph $G$ is a mapping $\phi$ which assigns to each
vertex and each edge of $G$ a real number as its weight so that
for any edge $uv$ of $G$, $\sum_{e \in E(v)}\phi(e)+\phi(v) \ne \sum_{e \in E(u)}\phi(e)+\phi(u)$.
A $(k,k')$-list assignment of $G$ is a mapping $L$ which assigns to each vertex $v$ a set $L(v)$ of
$k$ permissible weights and to each edge $e$ a set $L(e)$ of $k'$ permissible weights.
An $L$-total weighting is a total weighting $\phi$ with $\phi(z) \in L(z)$ for each $z \in V(G) \cup E(G)$.
A graph $G$ is called  $(k,k')$-choosable if for every $(k,k')$-list assignment $L$ of $G$, there exists a
proper $L$-total weighting.   As a strenghtening of the well-known 1-2-3 conjecture,  it was
conjectured in [ Wong and   Zhu,   Total weight choosability of graphs, J. Graph Theory 66 (2011), 198-212] that every graph without  isolated edge is
$(1,3)$-choosable.  It is easy to verified this conjecture for trees, however, to prove it for wheels seemed to be quite non-trivial.  In this paper, we develop some tools  and techniques  which enable us to prove this conjecture for generalized  Halin graphs.
\end{abstract}

{\small \noindent{{\bf Key words: }  Total weighting, $(k,k')$-matrix,  Halin graphs}
\section{Introduction}

A {\em total weighting} of $G$ is a mapping
$\phi: V(G) \cup E(G) \to R$. A total weighting $\phi$ is {\em
proper} if for any edge $uv$ of $G$,
$$\sum_{e \in E(u)}\phi(e) + \phi(u) \ne \sum_{e\in E(v)}\phi(e) + \phi(v),$$
where $E(v)$ is the set of edges incident to $v$. A total weighting
$\phi$ with $\phi(v)=0$ for all vertices $v$ is also called an {\em
edge weighting}.

Proper edge weighting (also called vertex colouring edge weighting) of graphs was introduced in \cite{KLT2004}.
It was conjectured in \cite{KLT2004} that every graph with no isolated edges has a proper edge weighting
$\phi$ with   $\phi(e)  \in
\{1,2, 3\}$ for $e \in E(G)$. This conjecture,
now called the   {\em 1-2-3 Conjecture}, has received a
lot of attention \cite{Add2005, Add2007, chang, KLT2004, KKP10, yu,
PW2010, WY2008}. It still remains open, and  the best partial result
on this conjecture was proved in \cite{KKP10}: every graph with no isolated edge has a proper edge weighting $\phi$
with  $\phi(e) \in \{1,2,3,4,5\}$ for all $e \in E(G)$.

Proper total weighting was first studied   in \cite{PW2010}.
It was conjectured in \cite{PW2010} that   every graph has a
proper total weighting $\phi$ with $\phi(z) \in \{1,2\}$ for all $z
\in V(G) \cup E(G)$. This conjecture, now called the {\em 1-2 Conjecture} has also received a lot of attention
and the best partial result was proved in \cite{K2008}: for any graph $G$, there is a proper total weighting $\phi$
with $\phi(v) \in \{1,2\}$ for each vertex $v$ and
  $\phi(e) \in \{1,2,3\}$ for each $e \in E(G)$.

A total list assignment of $G$ is a mapping $L$ which assigns to each element $z \in V(G) \cup E(G)$
a set $L(z)$ of real numbers as permissible weights.
An $L$-total weighting is a total weighting $\phi$ with $\phi(z) \in L(z)$ for each $z \in V(G) \cup E(G)$.
Assume $\psi: V(G) \cup E(G) \to \{1,2,\ldots\}$ is a  mapping which assigns to each vertex or edge $z$ of $G$
a positive integer. A total list assignment $L$ of $G$ is called a {\em
$\psi$-total list assignment} of $G$ if $|L(z)| = \psi(z)$ for all $z \in V(G) \cup E(G)$.
A graph $G$ is called {\em $\psi$-choosable} if for every $\psi$-list assignment $L$ of $G$, there exists a
proper $L$-total weighting. A graph $G$ is called  $(k,k')$-choosable if $G$ is $\psi$-choosable, where $\psi(v) = k$ for each vertex $v$ and $\psi(e)=k'$ for each edge $e$.


The list version of total weighting are studied in a few papers
\cite{BGN09, PY2012, PW2011, WWZ2012, WYZ08, WZ11} 
 It is known \cite{WZ11} that $G$ is $(k,1)$-choosable if and only if
$G$ is (vertex) $k$-choosable. So the concept of
$(k,k')$-choosability is a common generalization of vertex
choosability, edge weighting and total weighting of graphs. As
strengthening of the 1-2-3 conjecture and the 1-2 conjecture, it was
conjectured in \cite{BGN09,WZ11} that  every graph with no isolated
edges is $(1,3)$-choosable and conjectured
in \cite{WZ11} that every graph is $(2,2)$-choosable.
These two conjectures are
called the {\em $(1,3)$-choosability conjecture}
 and the {\em $(2,2)$-choosability conjecture}, respectively.


In the study of total weighting of graphs, one main algebraic tool  is Combinatorial Nullstellensatz.

 For each $z  \in V(G) \cup E(G)$, let $x_z$
be a variable associated to $z$. Fix an arbitrary orientation $D$ of $G$.
Consider the
polynomial
$$P_G(\{x_z: z \in V(G) \cup E(G)\}) = \prod_{e=uv \in E(D)}\left(  \left(\sum_{e \in E(u)} x_e+ x_u\right) - \left(\sum_{e \in E(v)} x_e+ x_v\right)\right).$$
Assign a real number $\phi(z)$ to the variable $x_z$, and view
$\phi(z)$ as the weight of $z$.
Let $P_G( \phi  )$ be the evaluation of the
polynomial at $x_z = \phi(z)$. Then $\phi$ is a proper total
weighting of $G$ if and only if $P_G( \phi) \ne 0$.
The question  is under what condition one can find an assignment $\phi$
for which $P_G( \phi) \ne 0$.

An {\em index function} of $G$ is a mapping $\eta$ which
assigns to each vertex or edge $z$ of $G$ a non-negative integer $\eta(z)$.
An index function $\eta$ of $G$ is {\em valid} if $\sum_{z \in V \cup E}\eta(z) = |E|$.
Note that $|E|$ is the degree of the polynomial
$P_G(\{x_z: z \in V(G) \cup E(G)\})$.
For a valid index function $\eta$, let
$c_{\eta}$ be the coefficient of the monomial
 $\prod_{z \in V \cup E} x_{z}^{\eta(z)}$ in the expansion of $P_G$.
It follows from the Combinatorial Nullstellensatz
\cite{nullstellensatz,AlonTarsi} that if $c_{\eta} \ne 0$, and
$L$ is a list assignment which assigns to each $z \in V(G) \cup E(G)$ a
set $L(z)$ of  $ \eta(z)+1$ real numbers, then  there exists a
mapping $\phi$ with  $\phi(z) \in L(z)$   such that $$P_G(\phi) \ne
0.$$

 Therefore, to prove that a graph $G$ is $(k,k')$-choosable, it suffices to show that
there exists an index function $\eta$ with $\eta(v) \le k-1$ for each vertex $v$ and $\eta(e) \le k'-1$ for
each edge $e$ and $c_{\eta} \ne 0$.

 The coefficient $c_{\eta}$ is related to the permanent of the martix   below (see Equation (1)).

We write the polynomial $P_G(\{x_z: z \in V(G) \cup E(G)\})$  as
$$P_G(\{x_z: z \in V(G) \cup E(G)\}) = \prod_{e \in E(D)}\sum_{z \in V(G) \cup E(G)}A_G[e,z]x_z.$$
Then for $e \in E(G)$ and $z \in V(G) \cup E(G)$,
if $e=(u,v)$ (oriented from $u$ to $v$), then
\begin{equation*}
A_G[e,z]=
\begin{cases} 1 & \text{if $z=v$, or $z \ne e$ is an edge incident to $v$,}
\\
-1 & \text{if $z=u$, or $z \ne e$ is an edge incident to $u$,}
\\
0 &\text{otherwise.}
\end{cases}
\end{equation*}
Now $A_G $ is a matrix, whose rows  are indexed by the edges of $G$ and the columns are indexed by edges and vertices of $G$.
Given a vertex or edge $z$ of $G$, let $A_G(z)$ be the column of $A_G$ indexed by $z$.
For an index function $\eta$ of $G$, let $A_G(\eta)$ be the
 matrix, each of
its column is a column of $A_G$, and each column $A_G(z)$ of $A_G$
occurs $\eta(z)$ times as  a column of $A_G(\eta)$. It is known
\cite{alontarsi1989}
and easy to verify that for a valid
index function $\eta$ of $G$,

$$c_{\eta} =\frac{1}{\prod_{z \in V \cup E}\eta(z)!}\per(A_G(\eta)), \eqno(1)$$
 where $\per(A)$ denotes the permanent of
the square matrix $A$.  Recall that if
$A$ is an $m \times m$ matrix, then   $$\per(A) = \sum_{\sigma \in
S_m}A[i, \sigma(i)],$$ where $S_m$ is the symmetric group of order
$m$.

 A square matrix $A$ is {\em \pnz} if $\per (A) \ne 0$.
 A square matrix of the form $A_G(\eta)$ is called an $(a,b)$-matrix
 if $\eta(v) \le a$ for each vertex $v$ and $\eta(e) \le b$ for each edge $e$.
Motivated by an edge weighting and an total weighting problem of
graphs, the following two conjectures were proposed in
 \cite{BGN09} and \cite{WZ11}, respectively.
\begin{conjecture}
\label{guess1} Every graph $G$ has a \pnz $(1,1)$-matrix.
\end{conjecture}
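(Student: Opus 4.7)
The plan is to attempt Conjecture~\ref{guess1} by induction on $|E(G)|$. Since $\eta(z)\in\{0,1\}$ and $\sum_z\eta(z)=|E(G)|$, choosing $\eta$ is the same as choosing a subset $S\subseteq V(G)\cup E(G)$ with $|S|=|E(G)|$; the matrix $A_G(\eta)$ is then the square submatrix of $A_G$ whose columns are indexed by $S$, and the goal is to pick $S$ so that its permanent is nonzero.

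The base cases are easy. An isolated vertex contributes an all-zero column that we drop by setting $\eta(v)=0$; a single edge gives a $1\times 1$ matrix of entry $\pm 1$. Since $A_G$ is block-diagonal across the components of $G$, it is enough to prove the conjecture component-wise, so we may assume $G$ is connected with $|E(G)|\ge 2$.

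For the inductive step I would first exhaust the low-degree reductions. If $v$ is a pendant vertex with incident edge $e=uv$, the column $A_G(v)$ has its unique nonzero entry in row $e$; setting $\eta(v)=1$ and $\eta(e)=0$ and expanding along that column gives $\per(A_G(\eta))=\pm\per(A_{G-v}(\eta'))$ for the natural restriction $\eta'$ of $\eta$ to $G-v$, so induction closes the case. An analogous expansion at a degree-two vertex produces two permanent terms that should be controllable by choosing $\eta$ on the two incident edges with opposite sign contributions, and a block-multiplicative argument at a cut vertex should reduce to the individual blocks. Together these moves dispose of trees and, more generally, any graph built from smaller graphs by pendant or ear attachment.

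The real obstacle is the $2$-connected case with minimum degree at least $3$, where no column of $A_G$ has small support and no obvious local reduction applies. Here a global construction of $S$ is needed, perhaps of the form $S=V(G)\cup F$ for a carefully chosen spanning subgraph $F\subseteq E(G)$ with $|E(G)|-|V(G)|$ edges, together with an orientation-based argument in the style of Alon--Tarsi to prevent cancellation among the signed terms of the permanent. Designing such a rule uniformly is precisely the open content of the $(2,2)$-choosability conjecture; I expect that any realistic attack, like the one pursued in the remainder of this paper for the related $(1,3)$-choosability question on Halin graphs, must first restrict to a structured subclass before aspiring to the general statement.
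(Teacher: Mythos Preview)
The statement you are addressing is Conjecture~\ref{guess1}, not a theorem; the paper does not prove it and explicitly says that ``both conjectures remain largely open.'' The paper's actual contribution is to verify Conjecture~\ref{guess2} (the \pnz $(0,2)$-matrix conjecture) for generalized Halin graphs, not Conjecture~\ref{guess1}. There is therefore no proof in the paper to compare your proposal against.

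Your proposal is also not a proof, and you are candid about this: the pendant-vertex reduction is correct, the degree-two and cut-vertex reductions are plausible but only sketched, and the $2$-connected minimum-degree-$3$ case is left entirely open. That diagnosis is accurate---this is precisely where the conjecture is hard, and no uniform construction of the required $S\subseteq V(G)\cup E(G)$ is known. Your closing remark that one must ``first restrict to a structured subclass'' is exactly what the literature does: the paper cites \cite{PW2011} for wheels and \cite{WZ2017} for Halin graphs as the known cases of Conjecture~\ref{guess1}. So your write-up is a reasonable survey of the obstructions, but it should be labeled as such rather than as a proof attempt.
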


\begin{conjecture}
\label{guess2} Every graph $G$ without isolated edges has a \pnz
$(0,2)$-matrix.
\end{conjecture}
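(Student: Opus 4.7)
The plan is to induct on the structure of $G$, exploiting the permanent formula $(1)$ together with the freedom to tune a valid index function $\eta$ satisfying $\eta(v)=0$ on vertices and $\eta(e)\in\{0,1,2\}$ on edges. Fix an orientation of $G$. Every column of $A_G(\eta)$ is then a column $A_G(e)$ for some edge $e$, whose nonzero entries lie only in the rows indexed by edges sharing an endpoint with $e$ (and the diagonal entry $A_G[e,e]$ is $0$). This locality is what will support Laplace-type expansions.

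First I would dispose of the tree case as a base. For a leaf $u$ joined by edge $e=uv$ to its neighbor $v$, the column $A_G(e)$ has nonzero entries only in the rows indexed by the edges at $v$ different from $e$. Setting $\eta(e)=2$ and Laplace-expanding $\per A_T(\eta)$ along the two copies of $A_G(e)$, the row indexed by $e$ itself must be matched with some other column, yielding a clean recursion to a $(0,2)$-matrix of $T-u$. An inductive choice of $\eta$ on the edges at $v$ guarantees a non-cancelling contribution, closing the tree case.

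For the inductive step on a general graph $G$, the plan is to peel off a vertex of small degree, a pendant path, or a planar ear, producing a smaller graph $G'$, and to express $\per A_G(\eta)$ as a combination of permanents $\per A_{G'}(\eta')$ for appropriate $\eta'$. The $(0,2)$-window (rather than $(0,1)$) is essential here: the extra slack allows one to align signs so that the dominant terms of the Laplace expansion survive. Small configurations (short cycles, wheels, books) would be verified by direct computation or by an explicit gadget argument in the spirit of the tools the paper promises to develop.

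The principal obstacle, and the reason the conjecture remains open in full generality, is graphs without any reducible local configuration, such as vertex-transitive regular graphs of large girth. For these one seems forced into a global scheme: fix a spanning tree $T\subseteq G$, allot $\eta(e)=2$ to a selection of non-tree edges absorbing the excess degree $|E(G)|-|E(T)|$, and then control the interaction between the tree contribution and the signed contributions of the co-tree edges. Taming this interaction is where genuinely new ideas appear to be required; the present paper succeeds for generalized Halin graphs precisely because the co-tree is a single cycle that can be handled by an explicit gadget computation, and extending this control to arbitrary co-tree structure is the real difficulty.
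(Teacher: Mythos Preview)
The statement you were asked to prove is \emph{Conjecture~\ref{guess2}} in the paper, not a theorem: the paper does not claim to prove it, and explicitly says both this conjecture and Conjecture~\ref{guess1} ``remain largely open.'' What the paper actually establishes is the special case of generalized Halin graphs (Theorem~\ref{main}). So there is no ``paper's own proof'' to compare against, and your proposal is not a proof either --- as you yourself concede in the final paragraph when you write that for graphs without reducible local configurations ``genuinely new ideas appear to be required.'' A proof proposal that ends by identifying the class of graphs on which it fails is an outline of the difficulty, not a proof.

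Even restricting to the parts you sketch with more confidence, the approach diverges from what the paper does for Halin graphs. The paper does not proceed by Laplace expansion and peeling off low-degree vertices or ears. For non-bipartite generalized Halin graphs it works modulo~$3$: it uses Observation~\ref{obs-oddballon} to replace vertex columns by edge-column combinations, and then applies the Alon--Tarsi theorem (Lemma~\ref{eulerian}) to count even versus odd Eulerian sub-digraphs in a carefully chosen orientation. For bipartite generalized Halin graphs it uses a structural decomposition (Lemma~\ref{2degleft}) into a piece $G[X]\cup E[X,Y]$ handled by a sign-monotone ``source/sink edge'' argument (Lemma~\ref{lem-new}) and a $2$-degenerate remainder. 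Neither branch resembles the inductive Laplace scheme you describe, and in particular the paper never builds the Halin case on a tree base via successive edge removals. Your remark that the Halin case succeeds ``because the co-tree is a single cycle that can be handled by an explicit gadget computation'' is also not an accurate summary of the paper's argument.
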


  Conjecture \ref{guess1} and Conjecture \ref{guess2} have been studied in many papers (see \cite{Survey}  for a survey of partial results on these two conjectures), and both conjectures remain largely open.  It is easy to verify both conjectures for trees. However, proving these two conjectures for wheels seem to be quite non-trivial.
It was proved in \cite{ PW2011} that Conjecture \ref{guess1}   is true for wheel, and   in  \cite{ WZ2017}  for Halin graphs.  Quite surprising, Conjecture \ref{guess2} remained open for wheels for a long time.  In this paper, we develop some tools and techniques and settle Conjecture \ref{guess2} for generalized  Halin graphs.

\section{ Main theorem and some observations}

A {\em Halin graph} is a planar graph obtained by taking a plane
tree $T$ (an embedding of a tree on the plane) without degree $2$
vertices by adding a cycle connecting the leaves of the tree
cyclically. If the tree $T$ is allowed to have degree $2$ vertices, then the resulting
graph is called a {\em generalized Halin graph}.


\begin{theorem}\label{main}
\label{generalhalin} Every generalized Halin graph $G$ has a \pnz
$(0,2)$-matrix.
\end{theorem}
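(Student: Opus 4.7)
My plan is to prove Theorem~\ref{main} by induction on the number of internal (non-leaf) vertices of the characteristic tree $T$ of $G$. The base case is when $T$ has a single internal vertex, so $G$ is a fan or a wheel; here I would exhibit an explicit valid $(0,2)$-index function and verify directly that its permanent is nonzero. As the paper already notes, Conjecture~\ref{guess2} for wheels was nontrivially open for a long time, so even this base case will require care --- probably a careful assignment of the multiplicities $0,1,2$ on the spoke and rim edges, followed by a recursive evaluation of the structured permanent that results.

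For the inductive step, choose a deepest internal vertex $u$ of $T$, so that every neighbour of $u$ in $T$ is a leaf except for one ``parent''. Let $v_1, \ldots, v_k$ be the leaf-neighbours of $u$ in the cyclic order in which they appear along the outer cycle $C$, and let $v_0, v_{k+1}$ be the two neighbours on $C$ of $v_1$ and $v_k$ lying outside the fan at $u$. Form a reduced graph $G'$ by deleting $v_1, \ldots, v_k$ together with all incident edges, and re-routing $C$ across the new arc $v_0 u v_{k+1}$, so that $u$ becomes a leaf of $T' = T - \{v_1, \ldots, v_k\}$ lying on the new outer cycle $C'$. Then $G'$ is a generalized Halin graph with strictly fewer internal tree-vertices than $G$, so the induction hypothesis provides a valid $(0,2)$-index function $\eta'$ on $G'$ with $\per(A_{G'}(\eta')) \ne 0$.

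The central step is then to extend $\eta'$ to a valid $(0,2)$-index function $\eta$ on $G$ with $\per(A_G(\eta)) \ne 0$. I would order the rows and columns of $A_G(\eta)$ so that the rows indexed by the ``new'' edges (the fan edges $uv_1, \ldots, uv_k$ and the cycle edges $v_0v_1, v_1v_2, \ldots, v_kv_{k+1}$) and the columns corresponding to these same edges appear first, and then expand the permanent along the new rows. By choosing the multiplicities on the new edges (and if necessary rebalancing between the edges $uv_0, uv_{k+1}$ inherited from $\eta'$ and the new cycle edges) so that each contributing term in the expansion forces the complementary submatrix to coincide with $A_{G'}(\eta')$, I aim for a factorisation of the form $\per(A_G(\eta)) = D_k \cdot \per(A_{G'}(\eta'))$, where $D_k$ is a purely local ``fan permanent''.

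The principal obstacle is then the fan-local computation: proving that, no matter what multiplicities $\eta'$ assigns to the interface edges $uv_0$ and $uv_{k+1}$, the fan multiplicities can always be chosen so that $D_k \ne 0$. To handle this I would isolate the fan-plus-boundary submatrix as a parametrised family of small matrices (the parameters being the interface multiplicities, which range over $\{0,1,2\}$), compute $D_k$ via a recursion on the fan-size $k$, and verify nonvanishing case by case. Packaging this as a reusable ``fan-attachment lemma'' --- stating that any \pnz $(0,2)$-matrix arising from a generalized Halin graph can be augmented by such a fan while remaining \pnz --- is presumably one of the general tools promised in the abstract, and is where I expect the bulk of the technical work to sit.
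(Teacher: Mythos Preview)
Your inductive reduction has a genuine gap at exactly the point you flag as ``the central step''. The factorisation $\per(A_G(\eta)) = D_k \cdot \per(A_{G'}(\eta'))$ would require $A_{G'}(\eta')$ to sit inside $A_G(\eta)$ as a complementary minor once the new fan rows are expanded out, but it does not. The edges $v_0u$ and $uv_{k+1}$ of $G'$ do not exist in $G$ at all, so their rows in $A_{G'}(\eta')$ have no counterpart among the rows of $A_G(\eta)$; the nearest candidates, $v_0v_1$ and $v_kv_{k+1}$, have different adjacency patterns at the $v_1$- and $v_k$-ends. Moreover, the row of $A_G$ indexed by the tree edge $up$ sees all of the new spoke columns $uv_1,\ldots,uv_k$, and the ``old'' rows through $v_0$ and $v_{k+1}$ see the new cycle columns $v_0v_1$ and $v_kv_{k+1}$, so there is no block-triangular structure to exploit. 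The Laplace expansion along the new rows therefore produces many cross terms mixing old and new columns, and nothing forces the complementary minors to collapse to a single copy of $A_{G'}(\eta')$. Your ``rebalancing'' remark does not address this, and there is no evident column identity (along the lines of Observation~\ref{ob2}) that repairs it. On top of this, your base case is the wheel, which is precisely the case the paper singles out as hard; you would still need an independent argument there.

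The paper proceeds quite differently and avoids induction on the tree altogether. It splits into two cases. For non-bipartite $G$ it works modulo~$3$: every vertex is the root of an odd balloon, so by Observation~\ref{obs-oddballon} twice any vertex column is an integral combination of edge columns, and it then suffices (Lemma~\ref{vertexcolumn}) to exhibit a matrix of \emph{vertex} columns with permanent $\not\equiv 0\pmod 3$; this is done via the Alon--Tarsi count of even versus odd Eulerian subdigraphs for a carefully chosen orientation, with a separate ad hoc treatment of odd wheels using two edge-disjoint odd balloons at the last vertex. For bipartite $G$ it finds a small set $X$ near a deepest leaf, orients $E[X]\cup E[X,Y]$ so that tree edges become source or sink edges, and builds an explicit $2$-to-$1$ map $\phi$ from edges to adjacent source/sink edges (Lemma~\ref{lem-new}); the complement $G[Y]$ is handled by $2$-degeneracy (Lemmas~\ref{lem-2deg2} and~\ref{2degleft}). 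None of this uses a fan-contraction or a permanent factorisation of the type you propose.
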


We will prove this theorem in the next two sections. In the proof, we shall frequently use the following  observations:

\begin{observation}
	\label{obs-linear}
	If $A$ is a matrix whose columns are integral liner combinations of columns of $A_G$ and $\per(A) \ne 0$ and each olumn $A_G(z)$ occurs in at most $\eta(z)$ times in the combinations, then there is an index function $\eta'$ with $\eta'(z) \le \eta(z)$ and $\per(A_G(\eta')) \ne 0$. Moreover, if $\per(A) \ne 0 \pmod{p}$ for some prime $p$, then $\per(A_G(\eta')) \ne 0 \pmod{p}$.
\end{observation}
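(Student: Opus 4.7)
The plan is to apply the multilinearity of the permanent in its columns. Let $A$ have $m$ columns, and write the $j$-th column of $A$ as $\sum_{z \in V(G) \cup E(G)} c_{j,z}\, A_G(z)$ with each $c_{j,z} \in \mathbb{Z}$. Multilinearity gives
\[
\per(A) \;=\; \sum_{(z_1,\ldots,z_m)} \Bigl(\prod_{j=1}^m c_{j,z_j}\Bigr)\, \per\bigl(A_G(z_1),\ldots,A_G(z_m)\bigr),
\]
where the sum is over all ordered tuples with $z_j \in V(G) \cup E(G)$.

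Since $\per(A)\neq 0$, some term on the right is nonzero. Fix such a tuple $(z_1,\ldots,z_m)$, and define an index function $\eta'$ on $V(G)\cup E(G)$ by $\eta'(z) := |\{j : z_j = z\}|$. The matrix $(A_G(z_1),\ldots,A_G(z_m))$ coincides with $A_G(\eta')$ up to a permutation of columns, and the permanent is invariant under column permutations, so $\per(A_G(\eta'))\neq 0$ as required.

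It remains to check $\eta'(z)\leq\eta(z)$ for every $z$. In the chosen tuple, $c_{j,z_j}\neq 0$ for all $j$, so $|c_{j,z_j}|\geq 1$. Interpreting the hypothesis that $A_G(z)$ ``occurs at most $\eta(z)$ times in the combinations'' as $\sum_{j} |c_{j,z}| \leq \eta(z)$, we get
\[
\eta'(z) \;=\; |\{j:z_j=z\}| \;\leq\; \sum_{j:\, z_j=z} |c_{j,z_j}| \;\leq\; \sum_{j} |c_{j,z}| \;\leq\; \eta(z).
\]
For the mod $p$ refinement, the very same expansion shows that if $\per(A)\not\equiv 0\pmod p$, at least one summand is nonzero modulo $p$, and the index function $\eta'$ extracted from that summand satisfies $\per(A_G(\eta'))\not\equiv 0\pmod p$.

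There is no real obstacle beyond bookkeeping: the proof is entirely a consequence of the multilinearity of $\per$ together with the integrality of the coefficients $c_{j,z}$, which ensures $|c_{j,z_j}|\geq 1$ at every factor of a nonzero term. The only point requiring care is matching the informal phrase ``occurs at most $\eta(z)$ times'' to a precise bound on the expansion, which is why I have recorded the inequality $\eta'(z)\leq\sum_j|c_{j,z}|$ explicitly above.
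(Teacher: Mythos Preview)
Your argument is correct and is precisely the multilinearity expansion the paper has in mind; the paper itself merely remarks that the observation ``can be derived directly from the multilinear property of permanent'' without spelling out the details you have provided. One small comment: the phrase ``occurs at most $\eta(z)$ times'' is more naturally (and more weakly) read as ``$c_{j,z}\ne 0$ for at most $\eta(z)$ indices $j$,'' which already yields $\eta'(z)\le\eta(z)$ directly and matches how the observation is applied later in the paper; your stronger reading $\sum_j|c_{j,z}|\le\eta(z)$ also works but is not needed.
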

This can be derived directly from  the multilinear property of permanent.

\begin{observation}\label{ob2}(  \cite{WZ11} )
For an edge $e=uv$   of $G$,
$$A_G(e)=A_G(u)+A_G(v). \eqno(2)$$
\end{observation}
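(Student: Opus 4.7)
The plan is to prove the column identity $A_G(e)=A_G(u)+A_G(v)$ entrywise. Since both sides are column vectors indexed by the rows of $A_G$, i.e., by the oriented edges $f=(x,y)\in E(D)$, it suffices to verify, for each such $f$, the scalar identity
$$A_G[f,e] \;=\; A_G[f,u] + A_G[f,v].$$
The natural way to do this is a case analysis based on the size of $\{x,y\}\cap\{u,v\}$, which controls how the edges $f$ and $e$ meet. Each case is a mechanical unwinding of the three-line definition of $A_G[f,z]$ given in the paper.

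If $\{x,y\}\cap\{u,v\}=\emptyset$, then $u,v\notin\{x,y\}$, so the two vertex-column entries vanish: $A_G[f,u]=A_G[f,v]=0$. Moreover $e=uv$ has both endpoints outside $\{x,y\}$, hence $e$ is not incident to $x$ or $y$, so $A_G[f,e]=0$ as well. If $|\{x,y\}\cap\{u,v\}|=1$, say $x=u$ (the three remaining sub-cases are handled symmetrically by swapping head/tail and the roles of $u,v$), then $A_G[f,u]=-1$ from the rule $z=x$, while $A_G[f,v]=0$, and $e\ne f$ is an edge incident to $x$, giving $A_G[f,e]=-1$. The sum matches. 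If $\{x,y\}=\{u,v\}$, then in a simple graph $f=e$; taking the orientation $(x,y)=(u,v)$ (the other orientation is analogous) gives $A_G[e,u]=-1$ and $A_G[e,v]=+1$, so their sum is $0$, and indeed $A_G[e,e]=0$.

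The only slightly non-obvious ingredient is the diagonal entry $A_G[e,e]=0$ in the last case. This follows from the definition: the rules that would produce $\pm 1$ for edge columns explicitly require $z\ne e$, and the row's rules for vertex columns apply only when $z\in\{x,y\}$, so $z=e$ falls under the ``otherwise'' branch. Conceptually, this reflects that in the $e$-th factor of $P_G$,
$$\Bigl(\sum_{e'\in E(v)}x_{e'}+x_v\Bigr)-\Bigl(\sum_{e'\in E(u)}x_{e'}+x_u\Bigr),$$
the variable $x_e$ appears once with coefficient $+1$ (as $e\in E(v)$) and once with coefficient $-1$ (as $e\in E(u)$), and these cancel. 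There is no real obstacle here; the identity is essentially a repackaging of the definition of $A_G$, and the only bookkeeping one must be careful about is the sign convention that the orientation $D$ imposes in the one-shared-endpoint and same-edge cases.
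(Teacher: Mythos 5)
Your entrywise case analysis is correct and is exactly the routine verification the paper omits, since it merely asserts that the identity ``follows easily from the definition of the matrix $A_G$.'' Both arguments amount to the same thing: unwinding the definition of $A_G[f,z]$ row by row (including the cancellation giving the zero diagonal entry $A_G[e,e]=0$), so there is nothing to add.
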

The above  follows easily from the definition  of the matrix $A_G$ (cf.  \cite{WZ11}):

A {\em balloon} is a graph obtained by attaching a path to a cycle
(i.e., identify one end vertex of a path with a vertex of a cycle).
If the cycle is of odd length, then the ballon is called an odd balloon.
The path could be a single vertex, in which case the balloon is simply a cycle.
If the path is not a single vertex, then the unique vertex of degree $1$ is called the {\em root of the ballon}. Otherwise, the root of the ballon (which is a cycle) is an arbtriary vertex of the cycle.

\begin{observation}
	\label{obs-oddballon}
	If $B$ is an odd balloon with root $v$, then $2A_G(v)$ is an integral linear combination of $A_G(e)$ for $e \in E(B)$.
\end{observation}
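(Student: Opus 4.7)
The plan is to derive the desired relation by iterating Observation \ref{ob2} along paths, exploiting a telescoping cancellation, and then combining the path and cycle contributions.

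First I would establish a ``path telescoping'' identity: for any path $P = x_0 x_1 \ldots x_\ell$ in $G$ with edges $e_i = x_{i-1}x_i$, the identity $A_G(e_i) = A_G(x_{i-1}) + A_G(x_i)$ from Observation \ref{ob2} gives, after taking the alternating sum,
\[
\sum_{i=1}^{\ell} (-1)^{i+1} A_G(e_i) \;=\; A_G(x_0) \;+\; (-1)^{\ell+1} A_G(x_\ell),
\]
since every intermediate column $A_G(x_j)$ ($1 \le j \le \ell - 1$) appears in two consecutive edges with opposite signs and cancels. In particular, an odd-length path yields $A_G(x_0) + A_G(x_\ell)$ and an even-length one yields $A_G(x_0) - A_G(x_\ell)$.

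Next I would handle the pure odd cycle case, that is, when the balloon is a cycle $C$ of length $2k+1$ with root $v$. Pick any cycle edge $e = vu$ incident to $v$, and let $P$ be the path from $v$ to $u$ consisting of the remaining $2k$ edges of the cycle. Since $P$ has even length, the path telescoping identity gives $A_G(v) - A_G(u)$ as an integer combination of the edge columns of $P$. Adding $A_G(e) = A_G(v) + A_G(u)$ eliminates $A_G(u)$ and produces exactly $2A_G(v)$ as an integer combination of the edge columns of $C$. This is the crucial place where \textbf{odd} length is used.

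Finally, for a general odd balloon with nontrivial path $v = w_0, w_1, \ldots, w_m$ attached to an odd cycle $C$ at $w_m$, I would apply the path telescoping identity to $w_0 w_1 \ldots w_m$ to express $A_G(v) + (-1)^{m+1} A_G(w_m)$ as an integer combination of the path edges, multiply by $2$, and then substitute the expression for $2 A_G(w_m)$ obtained from the previous step (applied to $C$ with root $w_m$). The result is $2A_G(v)$ written as an integer linear combination of $\{A_G(e) : e \in E(B)\}$, as required. There is no real obstacle here; the main thing to watch is the sign bookkeeping in the telescoping, and the whole argument hinges on the single non-trivial point that an odd cycle affords two paths of opposite parities between any two of its vertices, which is exactly what lets $A_G(u)$ be canceled to produce a factor of $2$.
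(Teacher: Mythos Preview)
Your proof is correct and is essentially the same argument as the paper's: both use Observation~\ref{ob2} to form alternating sums of edge columns along the path and along the odd cycle, so that intermediate vertex columns cancel and only $2A_G(v)$ survives. The paper simply writes down the resulting explicit formula in one line, whereas you derive it more carefully by first isolating the path telescoping identity and the odd-cycle case before combining them.
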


Indeed, if $P=(v_1,v_2, \ldots, v_k)$ and $C=(u_1, u_2, \ldots, u_{2p+1})$ and the balloon is obtained by identifying $v_k$ with $u_1$,  let  $e_i =v_iv_{i+1} $(for  $1\leq i \leq k-1  $) and  $e'_i =u_iu_{i+1}$ (for $1\leq i \leq 2p+1 $  and $u_{2p+2}=u_1$ ),
then
$$2A_G(v_1) = 2A_G(e_1) -   \ldots + (-1)^{k} 2A_G(e_{k-1}) + (-1)^{k -1} (A_G(e'_1) -\ldots + A_G(e'_{2p+1})).$$

\section{Non-bipartite generalized Halin graphs}

In this section,  we consider non-bipartite generalized Halin graphs.

\begin{lemma}
\label{vertexcolumn} Let $G$ be a connected non-bipartite graph. If
there is a matrix $A$ whose columns consists of vertex columns of
$A_G$ and $\per(A) \ne 0 \pmod{p}$ for some odd prime $p$, then $G$ has
a \pnz $(0, p-1)$-matrix.
\end{lemma}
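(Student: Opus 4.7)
My plan is to use Observation \ref{obs-linear} as the extraction engine: if I can construct any matrix $B$ whose columns are integer linear combinations of edge columns of $A_G$, whose permanent is nonzero modulo $p$, and in which each edge column $A_G(e)$ appears (with nonzero coefficient) in at most $p-1$ of the columns, then Observation \ref{obs-linear} delivers an index function $\eta'$ with $\eta'(v)=0$ for every vertex $v$, $\eta'(e)\le p-1$ for every edge $e$, and $\per(A_G(\eta'))\not\equiv 0\pmod{p}$, which is exactly a \pnz{} $(0,p-1)$-matrix. So the only task is to manufacture such a $B$ from the given $A$.

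The natural first step is Observation \ref{obs-oddballon}. Since $G$ is connected and non-bipartite, every vertex $v$ is the root of some odd balloon in $G$, and $2A_G(v)$ is an integer linear combination of the edge columns of that balloon. Because $p$ is odd, $2^m$ is invertible in $\mathbb{F}_p$, so replacing $A$ by $2A$ preserves non-vanishing of the permanent modulo $p$ while rewriting every column as an integer linear combination of edge columns only. This $2A$ is my candidate for $B$.

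The main obstacle is then controlling the coverage of each edge column. Fixing a single odd cycle $C$ and using the balloon from each $v_i$ through $C$ makes every edge of $C$ have nonzero coefficient in every column of $B$, giving coverage $m$, which can easily exceed $p-1$. To circumvent this I would exploit two sources of slack: different columns at the same vertex are free to use different balloons (so if $G$ has several odd cycles, the coverage can be spread across them); and modulo $p$, each column's expression is only determined up to adding integer multiples of the zero-relations $A_G(e)-A_G(u)-A_G(w)=0$ from Observation \ref{ob2} together with the consequent cycle relations $\sum_{e\in C'}\pm A_G(e)=2\sum_{v\in C'}A_G(v)$. Iteratively, whenever some edge $e$ has coverage exceeding $p-1$, I would pick a cycle $C'$ of $G$ through $e$ and apply the zero-relation supported on $C'$ to trade nonzero coefficients on $e$ for coefficients on the other edges of $C'$.

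The hard part I anticipate is proving that this rebalancing terminates globally: the same cycle relation is shared by every column in which one of its edges appears, so shifting mass for one column has side effects on the others, and a careless order of moves could push a different edge over the $p-1$ threshold. Either a careful scheduling of the shifts, or a direct linear-algebraic argument exploiting the fact that the cycle space of $G$ has dimension $|E|-|V|+1$, should suffice to drive every edge coverage down to $p-1$. Once this is achieved, Observation \ref{obs-linear} applied to the adjusted matrix $B$ produces the claimed \pnz{} $(0,p-1)$-matrix.
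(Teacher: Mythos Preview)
Your opening moves match the paper exactly: multiply $A$ by $2$ so that $\per(2A)=2^m\per(A)\not\equiv 0\pmod p$ (using that $p$ is odd), then use Observation~\ref{obs-oddballon} to write each column $2A_G(v)$ as an integer combination of edge columns, and invoke Observation~\ref{obs-linear} to extract an index function $\eta$ supported only on edges with $\per(A_G(\eta))\not\equiv 0\pmod p$.

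Where you go astray is in the coverage control. Your rebalancing scheme via cycle relations is never completed, and as you yourself note, it is not clear the iterative shifting terminates or can be globally scheduled. This is a genuine gap.

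The paper bypasses this entirely with a one-line observation you overlooked: once Observation~\ref{obs-linear} hands you \emph{any} index function $\eta$ with $\eta(v)=0$ for all vertices and $\per(A_G(\eta))\not\equiv 0\pmod p$, the bound $\eta(e)\le p-1$ is automatic. Indeed, by Equation~(1),
\[
\per(A_G(\eta))=c_\eta\cdot\prod_{z}\eta(z)!,
\]
where $c_\eta\in\mathbb{Z}$ is a monomial coefficient of $P_G$. If some $\eta(e)\ge p$ then $p\mid\eta(e)!$, whence $p\mid\per(A_G(\eta))$, a contradiction. So no coverage bookkeeping is needed: apply Observation~\ref{obs-linear} with arbitrarily loose upper bounds on edge multiplicities, and the nonvanishing modulo $p$ forces $\eta(e)\le p-1$ after the fact.
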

\begin{proof}
	Since $p$ is an odd prime, by replacing each column $A_G(v)$ with $2A_G(v)$ in $A$, the resulting matrix $A'$ has $\per(A') = 2^n \per(A) \ne 0 \pmod{p}$.
Since $G$ is connected and non-bipartite, for any vertex $v$, there is an odd balloon $B$ of $G$ with root $v$. By Observation \ref{obs-oddballon},  $2 A_G(v)$ can be written as an integral linear
combination of edge columns of $G$.
By Observation \ref{obs-linear},
there is an index function $\eta$ with $\eta(v) = 0$ for each vertex $v$ such that $\per(A_G(\eta)) \ne 0 \pmod{p}$. It is obvious that if
$\eta'$ is an index function for which $\eta'(e) \ge p$ for some edge $e$, then $\per(A_G(\eta')) = 0 \pmod{p}$. Therefore $\eta(e) \le p-1$ for each $e \in E(G)$. I.e., $A_G(\eta)$ is a \pnz $(0,p-1)$-matrix of
$G$.
\end{proof}

\begin{corollary}
\label{2deg} If $p$ is a prime, $G$ is a connected non-bipartite
$(p-1)$-degenerate graph, then $G$ has a \pnz $(0,p-1)$-matrix.
\end{corollary}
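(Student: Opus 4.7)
The plan is to reduce the corollary to Lemma \ref{vertexcolumn} by exhibiting a square matrix built entirely from vertex columns of $A_G$ whose permanent is nonzero modulo $p$. For $p=2$ the hypothesis is vacuous (a connected $1$-degenerate graph is a tree, hence bipartite), so I may assume $p$ is an odd prime. Using $(p-1)$-degeneracy, I would fix an ordering $v_1,\ldots,v_n$ of $V(G)$ such that each $v_i$ has at most $p-1$ neighbors among $\{v_{i+1},\ldots,v_n\}$, and orient every edge from its lower-indexed endpoint to its higher-indexed one. This acyclic orientation $D$ has out-degrees $\mu(v_i):=d^+_D(v_i)\le p-1$ with $\sum_i \mu(v_i)=|E(G)|$, so $M:=A_G(\mu)$ is a square matrix of size $|E(G)|$ whose columns are all vertex columns.

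The heart of the argument is to establish $\per(M)=(-1)^{|E(G)|}\prod_v \mu(v)!$. Each nonzero term of $\per(M)$ arises from a bijection between edges and column-slots; collapsing the $\mu(v)$ slots of each vertex, such a bijection records a function $f:E(G)\to V(G)$ with $f(e)$ an endpoint of $e$ and $|f^{-1}(v)|=\mu(v)$ for every $v$. The tail map $g(e)=\text{tail}(e)$ is clearly one such function. For any other candidate $f$, the set $S=\{e:f(e)\ne g(e)\}$ of edges on which $f$ picks the head must be a non-empty Eulerian sub-orientation of $D$: the vertex-by-vertex balance $|f^{-1}(v)|=|g^{-1}(v)|=d^+(v)$ translates into $d^+_S(v)=d^-_S(v)$ for every $v$. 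Since $D$ is acyclic, no such $S$ exists, so $f=g$ is unique. The permutations realizing $g$ are the $\prod_v \mu(v)!$ rearrangements of identical copies of each column $A_G(v)$, and each contributes $(-1)^{|E(G)|}$ since every selected entry is a tail entry equal to $-1$. This yields the claimed formula.

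Because $\mu(v)\le p-1<p$, each $\mu(v)!$ is a product of positive integers coprime to $p$, so $\per(M)\not\equiv 0\pmod{p}$. Since $G$ is connected and non-bipartite and $p$ is an odd prime, Lemma \ref{vertexcolumn} then produces the desired \pnz $(0,p-1)$-matrix of $G$. I expect the only delicate step to be the uniqueness of the function $f$; this is essentially an Alon--Tarsi-style observation --- an acyclic orientation admits only the trivial Eulerian sub-orientation --- recast in the permanent language of this paper. Everything else is bookkeeping and an appeal to Lemma \ref{vertexcolumn}.
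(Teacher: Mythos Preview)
Your proof is correct and follows essentially the same approach as the paper: use $(p-1)$-degeneracy to build a square matrix of vertex columns whose permanent is $\pm\prod_v \mu(v)!$, note this is nonzero modulo $p$, and invoke Lemma~\ref{vertexcolumn}. The paper simply asserts the permanent formula as ``easy to verify,'' whereas you supply the Alon--Tarsi style justification (the unique orientation-consistent endpoint map via acyclicity); this is a welcome elaboration rather than a different route.
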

\begin{proof}
Order the vertices $v_1, v_2, \ldots, v_n$ in such a way that each
vertex $v_i$ has back-degree $d_i \le p-1$, i.e., $v_i$ has at
most $p-1$ neighbours $v_j$ with $j < i$. Let $A$ be the matrix
consisting $d_i$ copies of the column of $A_G$ indexed by $v_i$ for
$i=1,2,\ldots, n$. It is easy to verify that $|\per(A)| =
\prod_{i=1}^nd_i!$.
Hence $\per(A) \ne 0 \pmod{p}$. It follows from
Lemma \ref{vertexcolumn} that $G$ has a \pnz $(0,p-1)$-matrix.
\end{proof}

\begin{lemma}
\label{2deg2} Assume $p$ is an odd  prime, $G$ is a connected non-bipartite graph, $v$ is a vertex of $G$ of degree $d$ and $G-v$ has a \pnz $(0,p-1)$-matrix $A_{G-v}(\eta)$. If there are $t$ edge disjoint odd balloons $B_1, B_2, \ldots, B_t$ with root $v$ such that for any $1 \le i \le t$ and $e \in B_i$, $\eta(e)=0$ and $t \ge d/(p-1)$, then   $G$ has a \pnz $(0,p-1)$-matrix.
\end{lemma}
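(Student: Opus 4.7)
My plan is to extend the given matrix $A_{G-v}(\eta)$ to a square matrix $A$ whose rows are indexed by all of $E(G)$, by adjoining $d$ new columns—one for each new row, i.e., each edge $f_j=vu_j$ incident to $v$. I would orient every $f_j$ away from $v$. Then the vertex column $A_G(v)$ satisfies $A_G[e,v]=0$ for every $e\in E(G-v)$ (since $v$ is not an endpoint of such an $e$) and $A_G[f_k,v]=-1$ for each $k$. Taking all $d$ new columns to be copies of $A_G(v)$ therefore makes $A$ block lower triangular,
$$A=\begin{pmatrix}A_{G-v}(\eta) & 0\\ C & D\end{pmatrix},$$
with $D$ the $d\times d$ all-$(-1)$ matrix, and so $\per(A)=\per(A_{G-v}(\eta))\cdot(-1)^{d}\,d!$. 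When $d<p$ this is nonzero modulo $p$, since $\per(A_{G-v}(\eta))\not\equiv 0\pmod p$ by hypothesis.

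Next I would convert each copy of $A_G(v)$ into an integer combination of edge columns via Observation \ref{obs-oddballon}. Scaling each new column by $2$ multiplies $\per(A)$ by $2^d$, which is invertible modulo the odd prime $p$. I would then distribute the $d$ new columns among the $t$ balloons by assigning $r_i$ columns to $B_i$, with $\sum_i r_i=d$ and $r_i\le p-1$; the hypothesis $t\ge d/(p-1)$ makes this possible. For each column assigned to $B_i$, substitute $2A_G(v)=\sum_{e\in E(B_i)}c_{i,e}A_G(e)$, where $c_{i,e}\in\{\pm 1\}$ on cycle edges and $c_{i,e}\in\{\pm 2\}$ on path edges. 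Because $\eta(e)=0$ on every edge of each $B_i$, the total multiplicity with which $A_G(e)$ occurs across the resulting columns of $A$ is bounded by $r_i|c_{i,e}|$. Observation \ref{obs-linear} then produces an index function $\eta'$ with $\eta'(v')=0$ for every vertex, $\eta'(e)\le p-1$ for every edge, and $\per(A_G(\eta'))\not\equiv 0\pmod p$.

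The hard part will be twofold. First, for $d\ge p$ the naive construction gives $\per(A)\equiv 0\pmod p$ because $d!\equiv 0\pmod p$, so the $d$ new columns cannot all be identical copies of $A_G(v)$. Second, the coefficient $\pm 2$ on the path edges of $B_i$ naively restricts $r_i$ to $(p-1)/2$ rather than $p-1$, giving $t\ge 2d/(p-1)$ rather than the lemma's $t\ge d/(p-1)$. I expect both issues to be resolved by the following refinement: for each $B_i$ with a nontrivial path, use its $v$-incident path edge $f_{j_i}$ directly as one of the $d$ new columns (as the edge column $A_G(f_{j_i})$), rather than as a scaled balloon expansion. By Observation \ref{ob2}, $A_G(f_{j_i})=A_G(v)+A_G(u_{j_i})$; the $A_G(v)$ summand still feeds the block-triangular calculation while breaking the ``$d$ identical copies'' degeneracy that creates the $d!$ factor, and the residual $A_G(u_{j_i})$ can be re-expressed via the reduced balloon $B_i\setminus\{f_{j_i}\}$, which is an odd balloon rooted at $u_{j_i}$ with no surviving $\pm 2$ coefficient on $f_{j_i}$. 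Performing this rearrangement simultaneously for all balloons while keeping every edge-column multiplicity at most $p-1$ and the permanent nonzero modulo $p$ is, I expect, the technical core of the proof.
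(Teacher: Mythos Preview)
Your block-triangular setup with $d$ copies of $A_G(v)$ is exactly the paper's argument, and your balloon substitution is the right second step. The two ``hard parts'' you anticipate, however, are both phantoms arising from misreadings; once they are removed, the proof is already complete and coincides with the paper's.

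\textbf{There is no mod $p$ obstruction.} You are importing the mod~$p$ strategy from Lemma~\ref{vertexcolumn}, where it was needed because the balloons were \emph{not} controlled and the inequality $\eta'(e)\le p-1$ was extracted \emph{a posteriori} from $\per(A_G(\eta'))\not\equiv 0\pmod p$. Here the situation is different: the balloons are given to you edge-disjoint, with $\eta(e)=0$ on all their edges, and with $t\ge d/(p-1)$. So the bound $\eta'(e)\le p-1$ will come directly from the combinatorics of the distribution, and you only need the \emph{integer} version of Observation~\ref{obs-linear}. Over the integers, $\per(A')=(\pm 2)^d\,d!\,\per(A_{G-v}(\eta))\ne 0$ for every $d$; the factor $d!$ causes no trouble. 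This is precisely what the paper does, and it dissolves your first ``hard part'' entirely.

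\textbf{The coefficient $\pm 2$ is irrelevant to the multiplicity count.} In Observation~\ref{obs-linear}, ``$A_G(z)$ occurs at most $\eta(z)$ times in the combinations'' means that at most $\eta(z)$ of the columns of $A$ involve $A_G(z)$ with a nonzero coefficient; the magnitude of that coefficient does not enter. When you expand by multilinearity, each column contributes exactly one $A_G(z)$ to a surviving term, regardless of whether its coefficient was $\pm 1$ or $\pm 2$. Thus an edge $e\in E(B_i)$ appears in exactly the $r_i$ columns you assigned to $B_i$, so its multiplicity is $r_i\le p-1$, not $r_i|c_{i,e}|$. Your second ``hard part'' therefore evaporates, and the elaborate refinement in your final paragraph is unnecessary.

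With these two corrections your outline is a complete proof, identical to the paper's: set $\eta'=\eta$ on $V\cup E\setminus\{v\}$ and $\eta'(v)=d$; note $\per(A_G(\eta'))\ne 0$ over $\mathbb{Z}$; replace each $2A_G(v)$ by the balloon expansion from Observation~\ref{obs-oddballon}, using each $B_i$ at most $p-1$ times; and invoke Observation~\ref{obs-linear}.
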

\begin{proof}
 Let $\eta'(z) =\eta(z)$ except that $\eta'(v) = d$.
	Let $A'$ be obtained from $A_G(\eta')$ by replacing each copy of $A_G(v)$ by $2 A_G(v)$. 	Then   $per( A_G(\eta'))=  2^d d! per(A_{G-v}(\eta)) \neq 0$.
By Observation \ref{obs-oddballon},
each copy of  $2A_G(v)$ can be written as integral linear combination of edge columns $A_G(e)$ for $e \in E(B_i)$, i.e., $\sum_{e \in E(B_i)}   a_{i,e} A_G(e)$ for each $i$, where $a_{i,e}$ are integers. As $t \ge d/(p-1)$,
we can replace the $d$ copies of    $2A_G(v)$ with
integral linear combinations $\sum_{e \in E(B_i)} a_{i,e}  A_G(e)$, so that each $B_i$ is used at most $p-1$ times. Therefore we can write each colummn of $A_G(\eta')$ as linear combination of edge columns of $A_G$, and each edge column is used at most $p-1$ times. So $G$ has a \pnz $(0,p-1)$-matrix.
\end{proof}

By Lemma \ref{vertexcolumn}, to prove that a non-bipartite generalized
Halin graph $G$ has a \pnz $(0,2)$-matrix, it suffices to show that
there is a matrix $A$ consisting of vertex columns of $A_G$ and
$\per(A) \ne 0 \pmod{3}$. By Equation (1), this is equivalent to
the existence of a valid index function $\eta$ of $G$ such that
$\eta(v)\leq 2$ for each vertex $v$, $\eta(e)=0$ for each edge $e$
and $c_{\eta} \ne 0 \pmod{3}$.

Recall that the graph polynomial of $G$ is defined as
$Q_G(\{x_v: v \in V(G)\}) = \prod_{uv \in E(\vec{G})}(x_u-x_v)$,
where $\vec{G}$ is an orientation of $G$. So $Q_G$ is obtained from $P_G$ by letting $x_e =0$ for each edge $e$ of $G$. Therefore,
if  $\eta(e)=0$ for all edges $e$, $c_{\eta}$ is indeed the coefficient of  the monomial $\prod_{v \in V(G)}x_v^{\eta(v)}$ in the expansion of the graph polynomial $Q_G$ of $G$.
For the purpose of calculating $c_{\eta}$ for such an index function $\eta$,
we use a result of  Alon and Tarsi
\cite{AlonTarsi}.

A sub-digraph $H$ (not necessarily connected) of a directed graph $D$
is called Eulerian if the in-degree $d^{-}_ H(v)$ of every vertex $v$
of $H$ is equal to its out-degree $d^{+}_ H(v)$. An Eulerian sub-digraph $H$ is even if it
has an even number of edges, otherwise, it is odd. Let $EE(D)$ and
$EO(D)$ denote the sets of even and odd Eulerian subgraphs of $D$,
respectively.   The following result was proved in
\cite{AlonTarsi}.

\begin{lemma}\cite{AlonTarsi}\label{eulerian}
Let $D = (V, E)$ be an orientation of an undirected graph $G$, and
$d_i$ is the out-degree of $v_i$ in $D$. Then the coefficient of
$\prod_{i=1}^n x_{v_i}^{d_i}$ in the graph polynomial of $G$ is
$\pm (|EE(D)|- |EO(D)|)$.
\end{lemma}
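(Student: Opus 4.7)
The plan is to expand the graph polynomial directly over the orientation $D$ and identify each surviving monomial contribution with an Eulerian sub-digraph of $D$. First I would compute $Q_G$ with respect to the orientation $D$ itself, writing
$$Q_G = \prod_{e = (u,v) \in E(D)} (x_u - x_v).$$
Expanding the product, each edge $e = (u,v)$ contributes either $+x_u$ (the tail) or $-x_v$ (the head), so the expansion becomes a sum, over all functions $f \colon E(D) \to V$ with $f(e) \in \{\mathrm{tail}(e), \mathrm{head}(e)\}$, of $(-1)^{m(f)} \prod_{v} x_v^{|f^{-1}(v)|}$, where $m(f)$ is the number of edges on which $f$ picks the head.

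Next I would isolate the coefficient of $\prod_i x_{v_i}^{d_i}$, i.e., the contribution of functions $f$ with $|f^{-1}(v)| = d^+_D(v)$ for every $v$. To translate this monomial condition into a combinatorial one, associate to each $f$ the sub-digraph
$$H(f) = \{e \in E(D) : f(e) = \mathrm{head}(e)\}.$$
Splitting the edges incident to $v$ into out-edges and in-edges of $D$ and counting which are assigned to $v$ by $f$, one obtains
$$|f^{-1}(v)| \;=\; d^+_D(v) \;-\; d^+_{H(f)}(v) \;+\; d^-_{H(f)}(v).$$
Consequently $|f^{-1}(v)| = d^+_D(v)$ holds for every $v$ if and only if $d^+_{H(f)}(v) = d^-_{H(f)}(v)$ for every $v$, i.e., $H(f)$ is an Eulerian sub-digraph of $D$. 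Since $f \mapsto H(f)$ is a bijection between such functions and Eulerian sub-digraphs of $D$, and since the sign attached to $f$ is $(-1)^{m(f)} = (-1)^{|H(f)|}$, summing the contributions yields the coefficient as $|EE(D)| - |EO(D)|$ exactly, whenever $Q_G$ is computed using the orientation $D$.

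Finally, I would handle the $\pm$: if $Q_G$ is formed with a different orientation $\vec{G}$, then for each edge whose direction in $\vec{G}$ differs from that in $D$, the corresponding factor of $Q_G$ picks up a sign, yielding a single global sign $\varepsilon \in \{+1,-1\}$ that is independent of the monomial. This gives the stated formula $\pm(|EE(D)| - |EO(D)|)$. The only point requiring any care is the vertex-count identity that turns the monomial condition $|f^{-1}(v)| = d^+_D(v)$ into the Eulerian condition on $H(f)$; once that bookkeeping is in place, the bijection and the sign calculation are straightforward.
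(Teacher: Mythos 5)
Your proof is correct and complete. The paper itself gives no proof of this lemma --- it is quoted directly from Alon and Tarsi --- and your argument (expanding $\prod_{(u,v)\in E(D)}(x_u-x_v)$ over head/tail choices, identifying the head-set $H(f)$ with a sub-digraph, and using the identity $|f^{-1}(v)| = d^+_D(v) - d^+_{H(f)}(v) + d^-_{H(f)}(v)$ to show that the contributing terms correspond exactly to Eulerian sub-digraphs signed by parity) is precisely the standard proof from that reference, including the correct handling of the global sign when the polynomial is formed from an orientation other than $D$.
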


\begin{lemma} \label{nonbi}
	Let $G$ be a non-bipartite generalized Halin graph. Then $G$ has a \pnz
	$(0, 2)$-matrix.
\end{lemma}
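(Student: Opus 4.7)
My plan is: by Lemma \ref{vertexcolumn} (with $p=3$), Equation~(1), and Lemma \ref{eulerian}, it suffices to exhibit an orientation $D$ of $G$ with every out-degree at most $2$ and $|EE(D)|-|EO(D)| \not\equiv 0 \pmod 3$. I would construct $D$ from the plane Halin structure of $G$: orient the outer cycle $C$ as a single directed cycle (so $C$ itself becomes an odd directed cycle of $D$, since $G$ is non-bipartite), and then orient the tree $T$ so that every internal vertex $u$ of $T$ has exactly two tree-edges outward (forcing $d^+(u) = 2$) and each leaf $\ell$ of $T$ has two of its three incident edges outward (two cycle-edges and one tree-edge combined). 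The choices of which two tree-edges are outward at each internal vertex and which edge is inward at each leaf are the free parameters to be tuned.

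The combinatorial content of the Eulerian count is that every directed simple cycle of $D$ is either $C$ itself or the concatenation of one arc of $C$ with a directed $T$-path joining the two endpoints of that arc. Hence Eulerian subgraphs of $D$ correspond to pairwise edge-disjoint families of directed $T$-paths between leaves, together with a consistent assignment of used/unused to each arc of $C$ cut out by their endpoints. I would compute $|EE(D)|-|EO(D)| \pmod 3$ by induction on the number of internal vertices of $T$: locate a ``twig'' (an internal vertex $u$ all of whose non-parent neighbours in $T$ are leaves of $T$), delete $u$ and its leaf-children, reconnect the outer cycle to obtain a smaller generalized Halin graph $G'$, apply the induction hypothesis to obtain a good orientation $D'$ on $G'$, and extend $D'$ to $D$ by orienting the new edges at $u$ so that the local effect on $|EE|-|EO| \pmod 3$ is multiplication by a nonzero element.

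The main obstacle is the inductive step: the new Eulerian subgraphs through $u$ (those using at least one new $u$-spoke or the new cycle-arc) interact combinatorially with the old Eulerian subgraphs of $D'$, and keeping the local factor nonzero mod $3$ requires choosing the two outward tree-edges at $u$ carefully (for instance, at the two ends of the block of $u$'s leaf-children on $C$). The base case is an odd wheel $W_n$, where $T$ has a single internal vertex; this is handled by an asymmetric direct orientation --- for $W_5$, the out-degree sequence $d^+=(2,2,2,1,2,1)$ on $(c,v_1,\ldots,v_5)$ (taking the rim as $v_2\to v_1$, $v_2\to v_3$, $v_4\to v_3$, $v_5\to v_4$, $v_1\to v_5$ and the spokes $c\to v_2$, $c\to v_5$, $v_1\to c$, $v_3\to c$, $v_4\to c$) produces five directed simple cycles and three edge-disjoint Eulerian pairs, giving $|EE|-|EO|=-1 \equiv 2 \pmod 3$, and this pattern suggests the ``correct'' asymmetric orientation for general $W_n$. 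As an algebraic fallback if the inductive orientation argument stalls on a particular reduction, Observation \ref{obs-oddballon} combined with Lemma \ref{2deg2} is available: because $C$ is odd, every internal vertex $u$ of $T$ admits many edge-disjoint odd balloons in $G$ via $C$, so $2A_G(u)$ can be rewritten as an integer combination of edge columns, giving the mod-$3$ parity control needed to close the induction.
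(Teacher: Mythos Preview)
Your reduction via Lemma~\ref{vertexcolumn}, Equation~(1), and Lemma~\ref{eulerian} is exactly what the paper does. But from that point on your plan diverges and contains a genuine error and an unfinished step.

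\textbf{The error.} You assert that the outer cycle $C$ is odd ``since $G$ is non-bipartite''. This implication is false: take $T=K_{1,4}$ (so $G=W_4$); then $C$ has even length $4$, yet $G$ contains triangles. More generally, any generalized Halin graph in which two consecutive leaves on $C$ have the same colour in the bipartition of $T$ is non-bipartite regardless of the parity of $|C|$. Your whole scheme is built on $C$ being an odd directed cycle of $D$, so the even-$|C|$ case is simply not covered.

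\textbf{The unfinished step.} Your orientation (two outgoing tree-edges at every internal vertex) need not exist --- e.g.\ when $T$ is a path --- and you yourself flag the inductive twig-removal as ``the main obstacle'' without carrying it out. The interaction between old and new Eulerian subgraphs after reattaching a twig is genuinely delicate, and nothing in the proposal shows the local factor is nonzero mod~$3$.

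\textbf{What the paper does instead.} The paper avoids induction entirely by choosing a much tamer orientation: direct \emph{all} tree edges toward the root and direct $C$ cyclically. Then every internal vertex of $T$ has out-degree $1$ and the root is a sink, so repeatedly deleting sinks leaves only $C$. If $|C|$ is even this already gives $|EE(D)|-|EO(D)|=2$. If $|C|$ is odd and $G$ is not a wheel, the paper picks one internal vertex $v$ all of whose sons are leaves, reverses a single tree edge $vv_k$ (and possibly one cycle edge), and counts the handful of resulting Eulerian subgraphs directly --- no recursion needed. The odd wheels are handled separately by direct check for $n\le 5$ and by Lemma~\ref{2deg2} for $n\ge 7$, in the same spirit as your fallback. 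The point you are missing is that orienting $T$ toward the root kills almost all Eulerian subgraphs at once, turning the Alon--Tarsi count into a two-line computation rather than an induction.
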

\begin{proof}
Assume $G$ is obtained from a tree plane $T$ by adding edges connecting its leaves into a cycle $C$. We choose non-leave vertex of $T$ as the root of $T$.
If $T$ has an even number of leaves, then we orient the edges of $G$ in such a way that
the edges in the tree $T$ are all oriented towards to the root vertex, and orient the edges of $C$  so that it becomes a directed cycle.
In such an orientation $D$ of $G$, by repeated deleting sink vertices (that must isolated vertices in any Eulerian subgraph), the resulting graph is a directed even cycle $C$.    As
$D$ has  no odd Eulerain sub-digraph, and has $2$ even Eulerian sub-digraph (the empty digraph and $C$).
As each vertex has out-degree at most $2$.
The conlcusion follows from Lemmas  \ref{vertexcolumn},  \ref{eulerian} and Observation \ref{ob2}.

Assume $T$ has an odd number of leaves. Hence $C$ is an odd cycle.

Assume first that $G$ is not a wheel.  Let $v$ be a non-leaf vertex of $T$ all its sons are leaves.
Assume $v$ has $k$ leaf sons $v_1, v_2, \ldots, v_k$.

If $k$ is even, then
we orient  the cycle $C$ as a directed cycle, orient the tree $T$ with all edges
towards the root, except that the edge $vv_k$ is oriented from $v$ to $v_k$. Straightforward counting
shows that among Eulerian sub-digraphs of $D$ containing the directed edge $vv_k$,
  $k/2$ are odd and $k/2-1$ are even.
The empty   Eulerian subgraph is even, and the directed cycle is odd. Hence $|EE(D)|- |OE(D)|\ \ne 0 \pmod{3}$. As
each vertex has out-degree at most $2$,  we are done.

If $k$ is odd, then we oriented the edges of $T$ as in the case that $k$ is even,
except that the edge in $C$ oriented towards $v_k$ is reversed as an edge oriented away from $v_k$ (so $v_k$ becomes a source vertex in the cycle $C$).
Straightforward counting shows that among Eulerian sub-digraphs of $D$ containing the directed edge $vv_k$, $(k-1)/2$ are even and $(k-1)/2$
are odd.

There is one even Eulerian sub-digraph not using the edge $vv_k$ (the empty sub-digraph)
and no odd Eulerian sub-digraph not using the edge $vv_k$. Again
each vertex has out-degree at most $2$,  we are done.

Assume $G$ is an odd wheel with  $V(G)=\{w,v_1,v_2,\ldots v_n\}$,
and $w$ is the center of the wheel. If $n \le 5$, then it can be checked directly that $G$ has a \pnz $(0,2)$-matrix.
Assume $n \ge 7$. Consider the graph $G-v_n$.  We order the vertices of $G-v_n$ as $v_1,w,v_2\ldots, v_{n-1}$. Then each vertex has back-degree at most $2$. As in the proof of Corollary \ref{2deg}, for the index function $\eta$ with $\eta(w)=1, \eta(v_i)=2$ for $i=2,3,\ldots, n-1$,
$\per(A_G(\eta)) \ne 0 \pmod{3}$. It is easy to check that each vertex $w, v_2, v_3, \ldots, v_{n-1}$ is the root of an odd balloon in $G-v_n$  that does not contain any edge incident to $v_1$, and does not contain
the edges $v_{n-1}w$ and   $v_{2}w$ .
  By Observation \ref{obs-oddballon} (cf. the proof of Lemma \ref{vertexcolumn}), we know that there is an index function $\eta'$ of $G-v_n$ with $\per(A_{G-v_n}(\eta')) \ne 0 \pmod{3}$ such that $\eta'(v)=0$ for all $v \in V(G-v_n)$, $\eta'(e) \le 2$ for any edge $e$ of $G-v_n$, and $\eta'(e)=0$ for $e \in E(v_n) \cup E(v_1) \cup \{v_{n-1}w,  v_{2}w\}$.
  Now the vertex $v_n$ is the root of two edge disjoint odd balloons $B_1$ with $V(B_1) = \{v_n, w, v_{n-1}\}$ and $B_2$ with $V(B_2) = \{v_n, v_1, v_2, w\}$. As $2 \ge d_G(v_n)/2$, by Lemma \ref{2deg2}, $G$ has a  \pnz $(0,2)$-matrix.
\end{proof}

\section{Bipartite generalized Halin graphs}

\begin{lemma}
	\label{lem-2deg2} If $p$ is a prime, $G$ is a connected  bipartite
	$(p-1)$-degenerate graph, $v$ is a vertex of degree $1$, then $G$ has a \pnz  matrix in which each edge column occurs at most $p-1$ times, the vertex column indexed by $v$ occurs once and there are no other vertex column.
\end{lemma}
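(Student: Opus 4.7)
The plan is to induct on $|V(G)|$.  The base case $|V(G)|=2$ is immediate: then $G$ is the single edge $vu_0$ and the $1\times 1$ matrix $(A_G(v))$ has permanent $\pm 1\not\equiv 0\pmod p$.

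For the inductive step, let $u_0$ be the unique neighbor of $v$ and let $f_0=vu_0$.  The heart of the argument is an expansion identity.  For any index function $\eta$ with $\eta(v)=1$ and $\eta(u')=0$ for every $u'\ne v$, the column $A_G(v)$ of $A_G(\eta)$ has a single nonzero entry, namely in row $f_0$, because $v$ has degree $1$.  Expanding the permanent along that column, and noting (Observation~\ref{ob2}) that on the remaining rows $E(G-v)$ the column $A_G(f_0)$ restricts to $A_{G-v}(u_0)$ while $A_G(v)$ restricts to zero, one obtains
\[
\per(A_G(\eta))\;=\;\pm\,\per(A_{G-v}(\eta^*)),
\]
where $\eta^*(u_0)=\eta(f_0)$, $\eta^*(e)=\eta(e)$ for $e\in E(G-v)$, and $\eta^*$ vanishes on every other vertex.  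Consequently it suffices to produce on $H:=G-v$ an index function $\eta^*$ with $\eta^*(u_0)\in\{0,\ldots,p-1\}$, $\eta^*(u')=0$ for $u'\ne u_0$, $\eta^*(e)\le p-1$ for every edge, $\sum\eta^*=|E(H)|$, and $\per(A_H(\eta^*))\ne 0\pmod p$.

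I package this as an auxiliary claim: for any connected bipartite $(p-1)$-degenerate graph $H$ and any vertex $u\in V(H)$, such an $\eta^*$ (with $u$ in place of $u_0$) exists.  The auxiliary claim is proved by a secondary induction on $|V(H)|$.  If $\deg_H(u)=1$, Lemma~\ref{lem-2deg2} applied to $(H,u)$ (valid since $|V(H)|<|V(G)|$) yields the desired $\eta^*$ with $\eta^*(u)=1$.  Otherwise $\deg_H(u)\ge 2$; in this case pick a leaf $w\ne u$ of $H$ (exists once a careful degeneracy-ordering argument is made), form $H'=H-w$, and apply the auxiliary claim inductively to $(H',u)$.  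The extension from $H'$ back to $H$ uses Observation~\ref{obs-linear}: adjoin the single-entry column $A_H(w)$ (nonzero only in row $ww^*$), so the resulting permanent equals $\pm\per(A_{H'}(\eta^{**}))$ and is nonzero by induction, and then rewrite $A_H(w)=A_H(ww^*)-A_H(w^*)$ via Observation~\ref{ob2}.

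The main obstacle I foresee is the subcase $w^*\ne u$, where the rewrite $A_H(w)=A_H(ww^*)-A_H(w^*)$ produces an unwanted vertex column $A_H(w^*)$.  I expect to handle this by iterating Observation~\ref{ob2} along a path in $H$ from $w$ to $u$; since $H$ is bipartite, the iterated residual vertex term is $\pm A_H(u)$, which is permitted provided the total multiplicity of $A_H(u)$ (contributed both by $\eta^{**}$ and by the path residual) stays $\le p-1$.  Combined with the requirement that no edge's multiplicity exceed $p-1$, this bookkeeping—and the accompanying choice of $w$ so that the path to $u$ is short and avoids edges heavily used by $\eta^{**}$—is the delicate technical core of the argument.
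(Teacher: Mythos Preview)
Your reduction from $G$ to $H=G-v$ via expansion along the single-nonzero-entry column $A_G(v)$ is fine, and the auxiliary statement you formulate for $(H,u_0)$ is indeed what is needed.  However, the proof you sketch for the auxiliary claim has two genuine gaps.

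First, in the secondary induction you ``pick a leaf $w\ne u$ of $H$''.  A connected bipartite $(p-1)$-degenerate graph need not have any vertex of degree~$1$: for $p=3$ every even cycle is $2$-degenerate, bipartite, and leafless.  A degeneracy ordering only guarantees a vertex of degree $\le p-1$, not a leaf, so the step ``adjoin the single-entry column $A_H(w)$'' breaks down.  Second, even when a leaf exists, you explicitly leave the multiplicity bookkeeping unresolved: iterating Observation~\ref{ob2} along a $w$--$u$ path may push edge multiplicities (and the multiplicity of $A_H(u)$) above $p-1$, and there is no evident way to choose $w$ or the path so as to avoid this in general.

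The paper sidesteps both difficulties with a direct (non-inductive) argument.  Take a degeneracy ordering $v_1,\ldots,v_n$ with $v_n=v$; the matrix $A$ with $d_i$ copies of $A_G(v_i)$ has $|\per(A)|=\prod_i d_i!\not\equiv 0\pmod p$.  The key observation is that, since $A_G(v)$ has a single nonzero entry, replacing any column $A_G(v_i)$ by $A_G(v_i)\pm A_G(v)$ leaves the permanent unchanged (two copies of $A_G(v)$ force permanent~$0$).  Choosing the sign by the parity of $\mathrm{dist}(v_i,v)$, bipartiteness makes each $A_G(v_i)\pm A_G(v)$ an alternating sum of edge columns along a $v_i$--$v$ path.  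Finally, the multiplicity bound is obtained for free: by Observation~\ref{obs-linear} some pure edge-column matrix $A_G(\eta')$ (plus the one copy of $A_G(v)$) has $\per\not\equiv 0\pmod p$, and any $\eta'$ with $\eta'(e)\ge p$ would force $\per(A_G(\eta'))\equiv 0\pmod p$ because the permanent of a matrix with $k$ identical columns is divisible by $k!$.  This last mod-$p$ trick is exactly what eliminates the ``delicate bookkeeping'' you were worried about, and it would also rescue your approach if you replaced ``leaf'' by ``vertex of minimum degree'' and carried the argument through modulo~$p$ rather than trying to control multiplicities by hand.
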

\begin{proof}
	Order the vertices $v_1, v_2, \ldots, v_n$ in such a way that each
	vertex $v_i$ has back-degree $d_i \le p-1$, i.e., $v_i$ has at
	most $p-1$ neighbours $v_j$ with $j < i$, and $v_n=v$.
	Let $A$ be the matrix
	consisting $d_i$ copies of the column of $A_G$ indexed by $v_i$ for
	$i=1,2,\ldots, n$. Similarly, $|\per(A)| =
	\prod_{i=1}^nd_i! \neq  0 \pmod{p}$.
	
	Assume   $A_G(v_i)$ is a column in $A$ indexed by $v_i$ and $i \ne n$. Let $A'$ be the matrix
	obtained from $A$ by replacing $A_G(v_i)$ by $ A_G(v)$.
	Since $A'$ has two copies of the column $A_G(v)$, which has
	only one nonzero entry, we know that $\per(A') = 0$. Therefore, if
	we replace $A_G(v_i)$ by $A_G(v_i) \pm A_G(v)$, the
	resulting matrix has the same permanent as $A$.
	
	For each vertex column in $A$ of the form $A_G(v_i)$ for $i \ne n$, we	replace it by $A_G(v_i) \pm A_G(v)$,
	where the $\pm$ sign is determined by the parity of
	the distance between the two vertices $v_i$ and $v$: if
	the distance is odd, then choose $+$, and otherwise choose $-$.
	Denote the resulting matrix by $A^*$. Then $\per(A^*)
	= \per(A') \ne 0 \pmod{p}$.
	
	Similarly as in the proof of Corollary \ref{2deg}, each column of $A^*$ other than the column indexed by
	$v$ can be written as an integral linear combination of edge columns
	of $A_{G}$.  As in the proof
	of Lemma \ref{vertexcolumn}, there is a matrix $A^{\#}$ consisting
	of edges columns of $A'_{G'}$ plus one column indexed by $v$, such
	that $\per(A^{\#}) \ne 0 \pmod{p}$, where each edge column   occurs at most $p-1$
	in $A^{\#}$.
\end{proof}

Assume $G$ is a graph and $X, Y$ are subsets of $V(G)$. We denote by $E[X,Y]$ the set of edges with one end vertex in $X$ and one end vertex with $Y$. Let $E[X] = E[X,X]$.

\begin{lemma}
\label{2degleft} Assume $G$ is a connected graph, and $V(G)=X \cup
Y$ is a partition of $G$.    If the subgraph $H$ induced by edges in $E[X] \cup E[X,Y]$ has a
 \pnz $(0,2)$ matrix $A$ which contains no columns indexed by edges in $E[X,Y]$ and  $G[Y]$  is $2$-degenerate, then $G$ has a \pnz $(0,2)$-matrix.
\end{lemma}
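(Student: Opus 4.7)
The plan is to produce a \pnz\ $(0,2)$-matrix for $G$ by extending $A$ with $|E[Y]|$ extra columns that take care of the $E[Y]$-rows, and then substituting those extras with genuine edge columns. Order $Y=\{y_1,\ldots,y_m\}$ so that in $G[Y]$ each $y_i$ has at most $d_i\le 2$ neighbours among $\{y_1,\ldots,y_{i-1}\}$, and build the $|E(G)|\times |E(G)|$ matrix $M=(A\mid V)$, where $V$ consists of $d_i$ copies of the vertex column $A_G(y_i)$ for each $y_i$; the column count is correct because $|E(H)|+\sum_i d_i = (|E[X]|+|E[X,Y]|)+|E[Y]| = |E(G)|$. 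Any column of $A$ is indexed by an edge of $E[X]$ and therefore vanishes on every $E[Y]$-row, so $M$ is block upper triangular:
\[
M \;=\; \begin{pmatrix} A & * \\ 0 & C_2 \end{pmatrix},
\qquad \per(M)=\per(A)\,\per(C_2).
\]
The bottom-right block $C_2$ coincides with the matrix constructed in the proof of Corollary~\ref{2deg} applied to $G[Y]$, namely $A_{G[Y]}(\eta^*)$ with $\eta^*(y_i)=d_i$, so $|\per(C_2)|=\prod_i d_i!\ne 0$. Combined with $\per(A)\ne 0$, this gives $\per(M)\ne 0$.

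The next step is to convert each copy of $A_G(y_i)$ in $V$ to an edge column $A_G(xy_i)$ with $xy_i\in E[X,Y]$, without changing the permanent. Pick any neighbour $x\in X$ of $y_i$; Observation~\ref{ob2} gives $A_G(xy_i)=A_G(y_i)+A_G(x)$, so by multilinearity of the permanent
\[
\per(M_{\text{new}}) \;=\; \per(M)\;+\;\per(M'),
\]
where $M'$ is $M$ with the same column replaced by $A_G(x)$. Because $x\in X$, the column $A_G(x)$ vanishes on every $E[Y]$-row (no edge of $E[Y]$ is incident to $x$), so the bottom-right block of $M'$ has a zero column and $\per(M')=0$. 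Each replacement therefore preserves $\per$, and applying it to all $d_i$ copies at every $y_i$ yields a matrix $M^*$ with $\per(M^*)=\per(M)\ne 0$ whose columns are all edge columns. The $A$-part uses each edge of $E[X]$ at most twice by hypothesis, and each $y_i$ contributes $d_i\le 2$ further columns supported on $E[X,Y]$-edges incident to $y_i$; since distinct $y_i$ own disjoint $E[X,Y]$-edges, every $E[X,Y]$-edge is used at most twice, so $M^*$ is the desired \pnz\ $(0,2)$-matrix.

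The main obstacle is the case where some $y_i$ with $d_i\ge 1$ has no neighbour in $X$, in which case the substitution $A_G(y_i)\to A_G(xy_i)$ has nothing available. Since $G$ is connected, the component of $G[Y]$ through $y_i$ still contains some vertex $b$ with a neighbour $x\in X$; iterating Observation~\ref{ob2} along a path in $G[Y]$ from $y_i$ to $b$ expresses $A_G(y_i)$ as an integer combination of $E[Y]$-edge columns, the column $A_G(xb)$, and a residual term $-\epsilon A_G(x)$ for a sign $\epsilon$. The $A_G(x)$ summand again contributes $0$ to the permanent by the same vanishing-column argument, so Observation~\ref{obs-linear} extracts a valid $(0,2)$-index function---provided the paths are routed so that no $E[Y]$-edge is used in more than two combinations. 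This routing is the only delicate bookkeeping step in the argument.
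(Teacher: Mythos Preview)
Your block-triangular construction and the replacement $A_G(y_i)\mapsto A_G(xy_i)$ in the ``nice'' case are correct and give a clean direct argument (slightly different from the paper, which instead invokes Lemma~\ref{lem-2deg2} component by component).

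The genuine gap is in the ``obstacle'' paragraph. The routing you defer as a ``delicate bookkeeping step'' is not merely delicate; it can be impossible. Take a component of $G[Y]$ that is a path $b\,y_1\,y_2\,y_3$ whose only edge to $X$ is $bx$. With the degeneracy ordering $b,y_1,y_2,y_3$ you must route three vertex columns $A_G(y_1),A_G(y_2),A_G(y_3)$, and the only path from each $y_j$ to $X$ is $y_j\!-\!\cdots\!-\!b\!-\!x$. Then the edge $by_1$ appears in all three linear combinations (and so does $bx$), so no routing keeps every edge below multiplicity~$2$. Hence Observation~\ref{obs-linear} over~$\mathbb{Z}$ cannot by itself deliver a $(0,2)$-index function here.

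What the paper does to get around this is essential: it treats each component $Y_i$ of $G[Y]$ separately, forms $G_i=G[Y_i]+e_i$ with $e_i=x_iy_i\in E[X,Y]$, and applies Lemma~\ref{lem-2deg2} to $G_i$. The point of that lemma is that the permanent is shown to be nonzero \emph{modulo $3$}; since any matrix with an edge column repeated three times has permanent $\equiv 0\pmod 3$, the multilinear expansion automatically yields a term with every edge used at most twice. This mod-$3$ step is exactly what your routing argument is missing. Note also that you cannot simply work mod~$3$ globally, because the hypothesis only gives $\per(A)\ne 0$, not $\per(A)\not\equiv 0\pmod 3$; this is why the paper confines the mod-$3$ argument to each $G_i$ and then multiplies the (nonzero, not necessarily mod-$3$) permanents via the block-triangular structure.
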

\begin{proof}
Assume $G[Y]$ has connected components $G[Y_1], G[Y_2], \ldots, G[Y_q]$. For each $1 \le i \le q$, let $e_i=x_iy_i$ be an edge connecting $x_i \in X$ to $y_i \in Y_i$. Let $G_i=G[Y_i]+e_i$.
By Lemma \ref{lem-2deg2}, $G_i$
has a \pnz  matrix $A_i$ in which each edge column occurs at most twice, the vertex column index by $x_i$ occurs once and there are no other vertex column.

Let $\eta_i$ be the index function of $G_i$
so that $A_i=A_{G_i}(\eta_i)$.

 Let $A'_i$ be the matrix obtained from $A_i$ by deleting the column indexed by $x_i$ and the row index by $e_i$. Since the column indexed by $x_i$ has only one nonzero entry, we conclude that $\per(A'_i) \ne 0$.

Let $A$ be a \pnz $(0,2)$ matrix of $H$ which contains no columns indexed by edges in $E[X,Y]$. Let $\eta$ be the index function of $H$ so that $A=A_H(\eta)$.

For each edge $e$ of $G$,
let
\[
\eta'(e)=\begin{cases}
\eta(e), &\text{if $e \in E[X]$}, \cr
 \eta_i(e), &\text{if $e \in E(G_i)$,} \cr
 0, & \text{ if $e \in E[X,Y] - \{e_1, e_2, \ldots, e_q\}$.}
 	\end{cases}
\]
 Let $A' = A_G(\eta')$. Note that $\eta'(e) \le 2$ for each edge $e$ of $G$. Now $A'$  is of the form

  \begin{eqnarray*}  A'= \left[
  	\begin{array}{cccccccc}
        &  &        &      &   \\
  		& A&        &      &   \\
  		&  &    A'_1&      &  {\raisebox{2ex}[0pt]{  *}} \\
  		&  &        &    \ddots \\
  		&  &       &        &   A'_q &  \\
        &  {\raisebox{2ex}[0pt]{\Huge0}}&       &        &
  	\end{array}\right]
  \end{eqnarray*}

Therefore $\per(A')=\per(A) \per(A'_1) \ldots \per(A'_q)\ne 0$, and hence $A'$ is
a \pnz $(0,2)$-matrix of $G$.
\end{proof}

Observe that any proper subgraph of a generalized Halin graph $G$ is $2$-degenerate. Therefore
to prove that a generalized Halin graph $G$ has a \pnz
$(0,2)$-matrix, by Lemma \ref{2degleft}, it suffices to find a
partition $V(G)=X \cup Y$ so that $ G[X] \cup E[X,Y]$ has a
 \pnz $(0,2)$ matrix which contains no columns indexed by edges in
$E[X,Y]$.

	Let $G$ be an oriented graph and $e$ be an edge in $G$. We call $e$
	a {\em sink edge} if all edges $e'$ adjacent to  $e$ are oriented
	towards $e$ (i.e, towards the common end vertex of $e$ and $e'$) and
	a {\em source edge} if all edges $e'$ adjacent to $e$ are oriented
	away from $e$.
	
\begin{lemma}
	\label{lem-new}
	Assume $G$ is a connected graph and $X \cup Y$ is a partition of  $V(G)$.
	If there is an orientation of edges in $E[X] \cup E[X,Y]$
	and  a mapping  $\phi: E[X] \cup E[X,Y] \to E[X]$
   such that for each $e \in E[X] \cup E[X,Y]$, $\phi(e) \ne e$ is   a source or a sink edge incident to $e$, and for each $e \in E[X]$,
 $|\phi^{-1}(e)| \le 2$,  then  the subgraph $H=G[X] \cup E[X,Y]$ has a
	 \pnz $(0,2)$ matrix which contains no columns indexed by edges in $E[X,Y]$.
\end{lemma}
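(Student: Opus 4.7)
The plan is to construct the required matrix directly from $\phi$ and then to show that, because every $\phi(e)$ is a source or sink edge, all nonzero terms in the permanent expansion carry the same sign, so that the existence of a single nonzero term suffices.

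Set $\eta(v)=0$ for every vertex $v$, $\eta(e)=0$ for every $e\in E[X,Y]$, and $\eta(e)=|\phi^{-1}(e)|$ for every $e\in E[X]$. The hypothesis $|\phi^{-1}(e)|\le 2$ gives a $(0,2)$-matrix $A^{*}:=A_H(\eta)$ with no column indexed by an edge of $E[X,Y]$; and since $\phi$ has domain $E(H)=E[X]\cup E[X,Y]$, the column count equals $|E(H)|$, so $A^{*}$ is square. Label the $|\phi^{-1}(e)|$ copies of $A_H(e)$ in $A^{*}$ by the elements of $\phi^{-1}(e)$; then the columns of $A^{*}$ are bijectively labelled by $E(H)$, the column with label $e''$ being $A_H(\phi(e''))$, and a row-to-column matching becomes a permutation $\sigma$ of $E(H)$.

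The key observation, immediate from the definition of $A_H$, is that source and sink edges give rise to uniform-sign columns: if $f$ is a source edge, then for every edge $e'\ne f$ adjacent to $f$ the common endpoint is the tail of $e'$, so $A_H[e',f]=-1$, and $A_H[e',f]=0$ otherwise; symmetrically, $A_H[e',f]=+1$ on adjacent rows if $f$ is a sink. Hence for any permutation $\sigma$,
\[
\prod_{e'\in E(H)} A^{*}[e',\sigma(e')] \;=\; \prod_{e'\in E(H)} A_H[e',\phi(\sigma(e'))]
\]
is either $0$ or $(-1)^{s}$, where $s$ is the number of source edges in the multiset $\{\phi(\sigma(e')):e'\in E(H)\}$. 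Since $\sigma$ merely permutes its argument, this multiset equals $\{\phi(e''):e''\in E(H)\}$, so $s$ is independent of $\sigma$.

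Thus $\per(A^{*})=\pm N$, where $N$ counts permutations giving a nonzero product; the identity permutation contributes $\prod_{e'} A_H[e',\phi(e')]=\pm 1\ne 0$, since each $\phi(e')$ is an edge distinct from $e'$ and adjacent to $e'$. Therefore $\per(A^{*})\ne 0$, and $A^{*}$ is a \pnz $(0,2)$-matrix of the desired form. The main technical point is the sign-uniformity step, and I expect the source/sink hypothesis on $\phi$ was introduced precisely to force this, eliminating cancellations that would otherwise plague the permanent.
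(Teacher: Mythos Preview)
Your proof is correct and follows essentially the same approach as the paper: both define $\eta(e)=|\phi^{-1}(e)|$, both exploit that source/sink edges yield sign-uniform columns of $A_H$, and both use $\phi$ itself as the nonzero diagonal. Your argument is slightly more explicit in justifying why sign-uniform columns force all nonzero terms of the permanent to share a sign (via the observation that the multiset $\{\phi(\sigma(e'))\}$ is independent of $\sigma$), whereas the paper simply asserts this reduction; but there is no substantive difference.
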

\begin{proof}
Let $H$ be the subgraph of $G$ induced by edges in $E[X] \cup E[X,Y]$. Let $D$ be an orientation of edges in $H$, and  $\phi$ be a mapping
from $E[X] \cup E[X,Y] \to E[X]$
such that for each $e \in E[X] \cup E[X,Y]$, $\phi(e)$ is   a source or a sink edge incident to $e$, and for each $e \in E[X]$,
$|\phi^{-1}(e)| \le 2$.

 Let $\eta(e)=|\phi^{-1}(e)|$ for each edge $e \in E[X]$. We shall show that $A_H(\eta)$ has non-zero permanant.
 Note that the column vector $A_H(e)$ is non-negative if $e$   is a
 sink edge and   non-positive if $e$   is a source edge. Thus to prove that $A_H(\eta)$ has non-zero permanant, it suffices to find a one-to-one mapping $\pi$ between the rows and columns of $A_H(\eta)$
 such that for each row $e$, the entry $A_H(\eta)[e, \pi(e)] \ne 0$.
 The rows of $A_H(\eta)$ are indexed by edges in $G[X] \cup E[X,Y]$ and columns are indexed by a multiset of edges in $E[X]$, with each $e \in E[X]$ occurs $\eta(e) = |\phi^{-1}(e)|$ times. Since edges in $\phi^{-1}(e)$ are incident to $e$, the mapping $\phi$ is such a one-to-one mapping.	
\end{proof}

\begin{lemma}\label{bi}
	Let $G$ be a bipartite generalized Halin graph. Then $G$ has a \pnz
	$(0, 2)$-matrix.
\end{lemma}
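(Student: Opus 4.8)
The plan is to use the reduction set up in the paragraphs preceding the statement. Since every proper subgraph of a generalized Halin graph is $2$-degenerate, by Lemma \ref{2degleft} it suffices to exhibit a partition $V(G) = X \cup Y$ for which the subgraph $H = G[X] \cup E[X,Y]$ carries a \pnz $(0,2)$-matrix containing no column indexed by an edge of $E[X,Y]$. By Lemma \ref{lem-new}, to produce such a matrix it is in turn enough to orient the edges of $H$ and to construct a map $\phi \colon E(H) \to E[X]$ sending every edge $e$ to an incident source or sink edge $\phi(e) \ne e$ lying inside $X$, with every edge of $E[X]$ used at most twice. Thus the whole problem reduces to a purely combinatorial construction of $X$, $Y$, an orientation of $H$, and the assignment $\phi$.

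First I would record the structural consequences of bipartiteness. Root the defining tree $T$ at a non-leaf vertex. Because $G$ is bipartite, the leaf cycle $C$ is even, and two leaves consecutive on $C$ must receive different colours in the proper $2$-colouring of $G$; since all children of a common vertex share one colour, no two leaf-children of the same internal vertex are consecutive on $C$. This is the key restriction I would exploit: it forces the leaf-children of any support vertex to be separated along $C$ by leaves hanging from other vertices, which is what will keep the fibres of $\phi$ small.

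The orientation I would try is to send every tree edge toward the root, so that at each internal vertex all child edges point inward; this already makes many rootward tree edges behave like sink edges once their endpoints are treated. I would orient the even cycle $C$ so that sinks and sources alternate around it, which is possible precisely because $|V(C)|$ is even. The design goal is that each pendant edge and each cycle edge can be charged, through $\phi$, to a nearby tree edge that the orientation has turned into a genuine source or sink edge. Concretely, I would let $Y$ consist of a set of leaves chosen so that $G[Y]$ is a disjoint union of paths along $C$ (hence $2$-degenerate, as required, and directly amenable to Lemma \ref{lem-2deg2} when its components are glued back in), while placing the remaining leaves in $X$ to supply the extra source and sink edges needed to absorb the cycle.

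The construction of $\phi$ I would carry out bottom-up on $T$: at each support vertex $v$ I would distribute its incident pendant edges and the cycle edges at its leaf-children among the source/sink tree edges available at $v$, using the separation property above to guarantee that no edge of $E[X]$ is used more than twice. The main obstacle is exactly this local balancing step: under a single global orientation one must make enough edges of $E[X]$ simultaneously source or sink edges and then realise $\phi$ with all fibres of size at most $2$, at internal vertices of large degree and under the rigid parity constraints imposed by the even cycle. I expect this to force an adaptive choice of $X$ and $Y$ at each support vertex — moving just enough leaf-children into $X$ (so that their pendant and cycle edges become usable source/sink edges) while leaving the rest in $Y$ so that $G[Y]$ stays a $2$-degenerate union of paths — followed by a verification that the resulting $\phi$ meets the hypotheses of Lemma \ref{lem-new}. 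Once $H$ is handled, Lemma \ref{2degleft}, with Lemma \ref{lem-2deg2} supplying the matrices for the components of $G[Y]$, assembles a \pnz $(0,2)$-matrix for all of $G$.
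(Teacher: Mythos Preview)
Your reduction to Lemmas~\ref{2degleft} and~\ref{lem-new} is exactly the framework the paper uses, and your structural observation (no two leaf-children of the same vertex can be consecutive on $C$) is correct and relevant. But the proposal stops where the real work begins: you never exhibit a concrete $X$, orientation, or map $\phi$, and you explicitly flag the balancing step as ``the main obstacle'' without resolving it. Note also that orienting all tree edges toward the root does not, by itself, manufacture source or sink edges: for an internal tree edge $uv$ (with $v$ the parent), the edge from $v$ to \emph{its} parent points away from $uv$, so $uv$ is a sink edge only when $v$ is the root. Your global scheme therefore does not supply the source/sink edges $\phi$ needs, and a bottom-up balancing over \emph{all} support vertices simultaneously is exactly what you have not carried out.

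The paper avoids this global difficulty by going local. Rather than taking $Y$ to be a sparse set of leaves and $X$ large, it takes $X$ to be a \emph{small} set: a deepest leaf $v_1$, its father $v_2$ (your observation forces $d_T(v_2)=2$), its grandfather $v_3$, and depending on a short case analysis on the number of sons of $v_3$, either one more leaf or all of $v_3$'s descendants (at most two levels). Then $Y=V\setminus X$ is a proper subgraph of $G$ and hence automatically $2$-degenerate. With $X$ this small, the orientation making $v_3$ a sink and the distance-$2$ vertices sources turns every tree edge of $E[X]$ into a genuine source or sink edge, and either a direct permanent computation (when $|X|=4$) or an explicit $\phi$ finishes the job. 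In short: the missing idea is that a tiny, carefully placed $X$ suffices, so no global balancing is ever needed.
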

\begin{proof}
By  Lemma \ref{2degleft}, it suffices to choose a set $X$ such that
the subgraph  $H=G[X] \cup E[X,Y]$  has a a \pnz
$(0,2)$ matrix which contains no columns indexed by edges in $E[X,Y]$.  In all the figures  below,  vertices of $X$ are indicated by open dots,    and vertices of $Y$ are indicated by solid dots.

Recall that $G$ is obtained from a plane tree $T$ by adding a cycle connecting its leaves in order. If $T$ is a path, by choosing  $X$ with three consecutive vertices and using twice of edges  in $E[X]$ as column vectors, then it can easily be verified that this is a  \pnz
	$(0, 2)$-matrix of
$H$. Assume $T$ is not a path. We choose a vertex $r \in V(T)$ of degree at least $3$ as the root of $T$. Let $v_1$ be a leaf with maximum depth. Since $G$ is bipartite, the father $v_2$ of $v_1$ has only one son (i.e. $d(v_2)=2$). Let $v_3$ be the father of $v_2$.

\bigskip
\noindent
{\bf Case 1:}  $v_3$ has two or three sons.
\bigskip

Let $w$ be  a leaf son of $v_3$ and choose $X=\{v_3,v_2,v_1,w\}$.  We orient the edges in $H$  so that $v_3$ is a sink vertex and $v_1$ is a source vertex.

\begin{figure}[h]
	\centering
	\scalebox{0.5}{\includegraphics{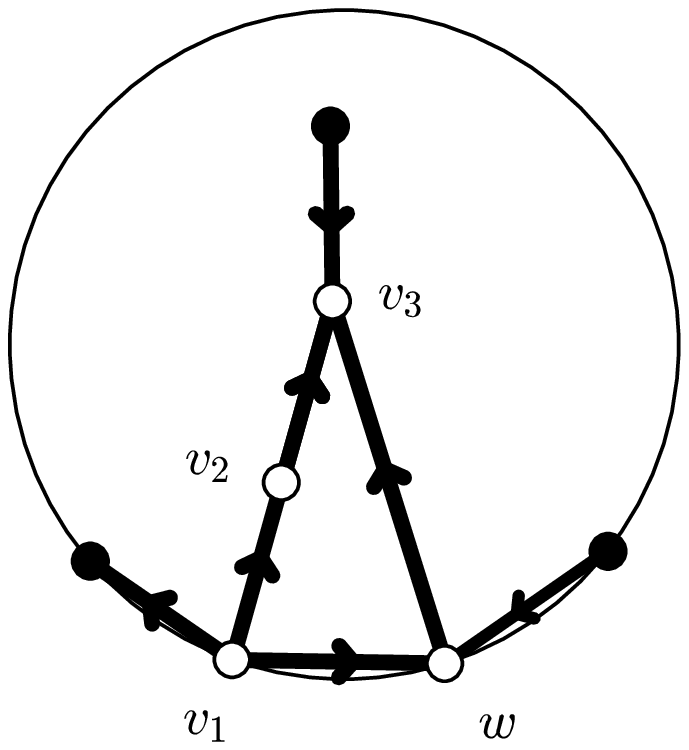}}
	\caption{ $X$  and $H$}
	\label{fig1}
\end{figure}

If  $v_3$ has two  sons, as depicted in Figure \ref{fig1},
then let $A$ be the matrix consisting two copies of columns of $A_H$ indexed by   $v_1v_2,   v_2v_3,   v_3w$ and one copy of the column of $A_H$ indexed by $v_1w$. I.e.,

 \begin{eqnarray*}
 A =
\begin{array}{ccccc}
v_1v_2, v_1v_2, v_2v_3, v_2v_3, v_3w, v_3w, v_1w\\
\begin{bmatrix}
-1&  -1&   0&   0&    0&   0&   -1   \cr
 0&   0&   1&   1&    0&   0&  -1    \cr
-1&  -1&   0&   0&    1&   1&    0   \cr
-1&  -1&   0&   0&    1&   1&   0    \cr
 0&   0&   1&   1&    1&   1&   0    \cr
 0&   0&   1&   1&    0&   0&  -1    \cr
 0&   0&   0&   0&    1&   1&   1    \cr
\end{bmatrix}&
\end{array}
\end{eqnarray*}
Then $\per(A) = -24$, and we are done.
\bigskip
\noindent

If  $v_3$ has three  sons,   as depicted in Figure \ref{fig4},
we choose  twice the columns of $A_H$ indexed by $v_1v_2,  v_2v_3,   v_3w, v_1w$.

\begin{figure}[h]
    \centering
    \scalebox{0.55}{\includegraphics{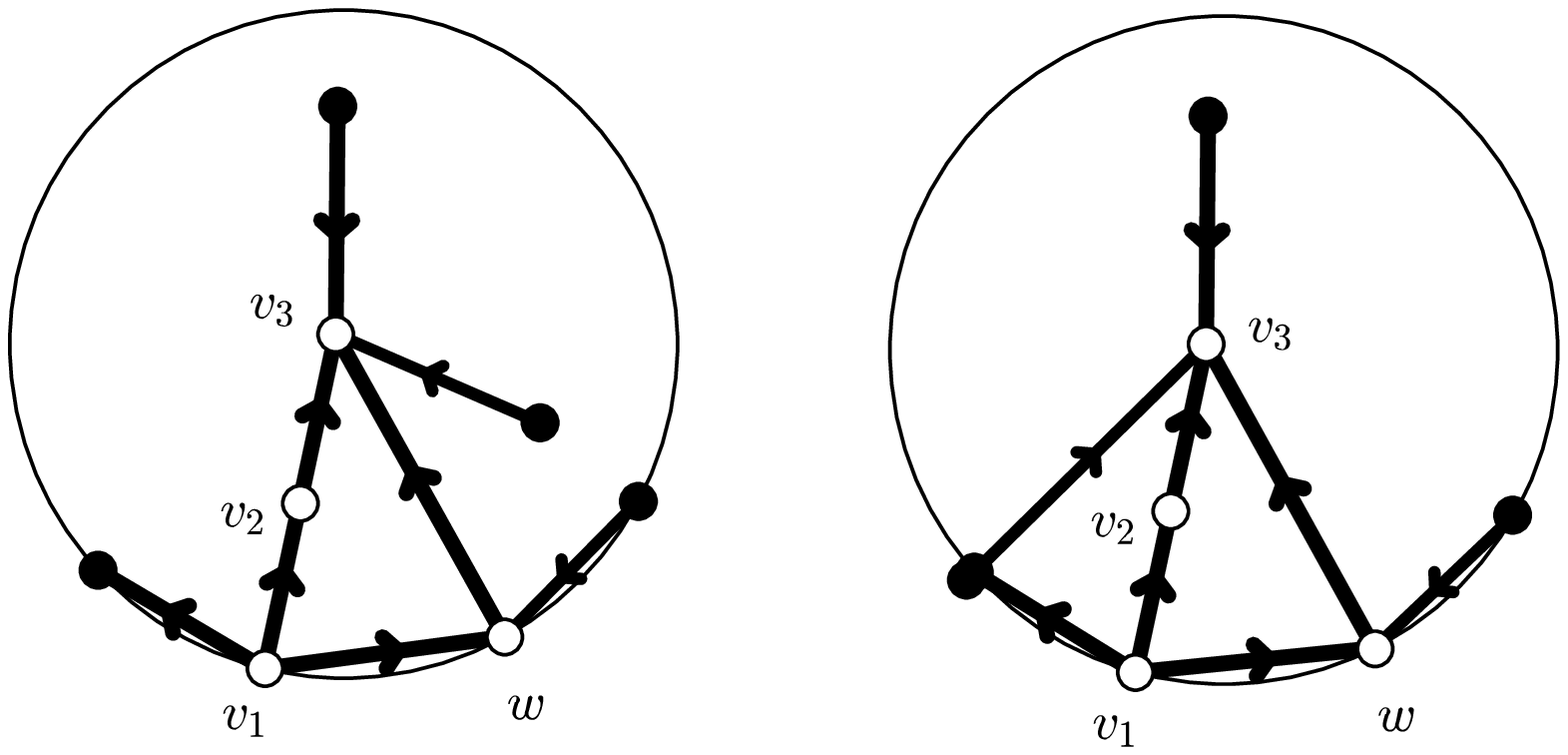}}
    \caption{  $X$  and $H$}
    \label{fig4}
\end{figure}

Then

 \begin{eqnarray*}
A =
\begin{array}{ccccc}
v_1v_2, v_1v_2, v_3w, v_3w, v_2v_3, v_2v_3, v_1w,v_1w\\
\begin{bmatrix}
-1& -1&  0&   0&  0&  0&   -1& -1 \cr
 0&  0&  0&   0&  1&  1&   -1& -1 \cr
-1& -1&  1&   1&  0&  0&    0&  0 \cr
-1& -1&  1&   1&  0&  0&    0&  0 \cr
 0&  0&  1&   1&  1&  1&    0&  0 \cr
 0&  0&  0&   0&  1&  1&   -1& -1 \cr
 0&  0&  1&   1&  0&  0&    1&  1 \cr
 0&  0&  1&   1&  1&  1&    0&  0
\end{bmatrix}&
\end{array}
\end{eqnarray*}
Then $\per(A)=-48$ and
we are done.

\bigskip
\noindent
{\bf Case 2:}  $v_3$ has at least four sons or $v_3$.
\bigskip

Let $X$ be the set consisting $v_3$ and all its descendants.  Let $Y=V-X$. Again, orient the edges of $H$ so that $v_3$ is a sink vertex, and all vertices at distance $2$ from $v_3$ are source vertices as in Figure \ref{fig2}.

\begin{figure}[h]
    \centering
    \scalebox{0.5}{\includegraphics{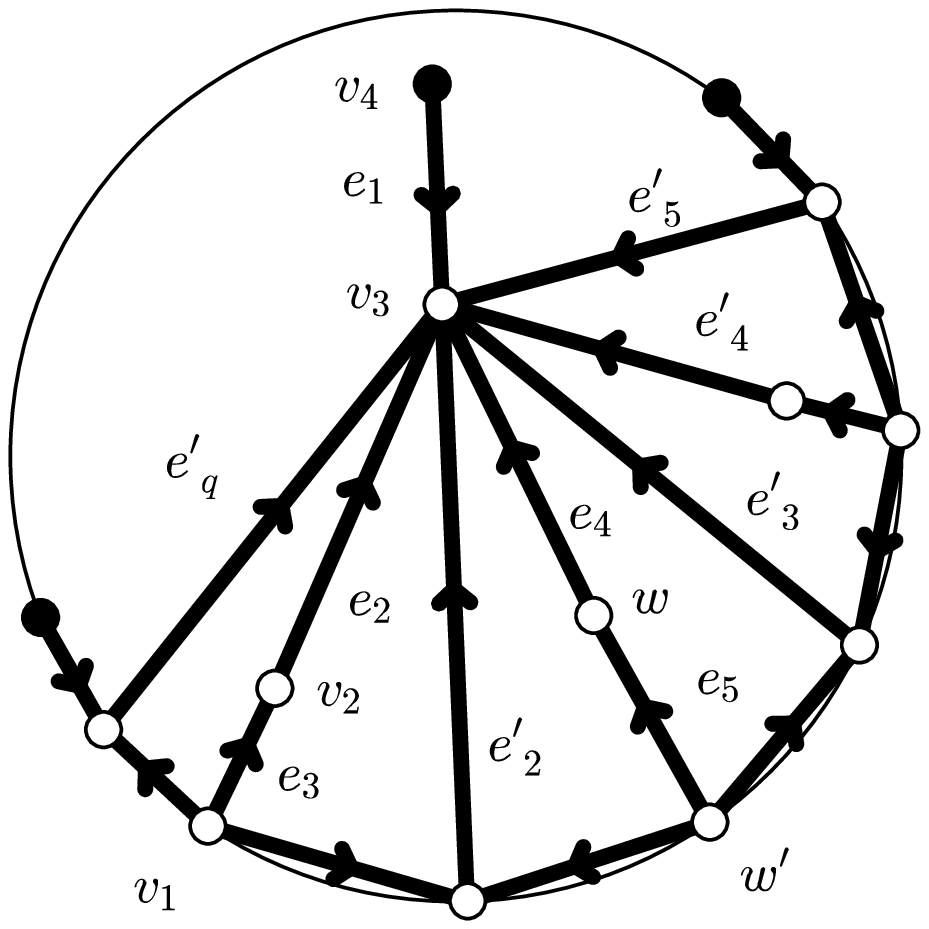}}
    \caption{ $X$  and $H$}
    \label{fig2}
\end{figure}

Then all the edges in $E[X] \cap E(T)$ are source or sink edges.

As $v_3$ has at least four sons, there is at least one son, say $w \ne v_2$, such that $w$ is not a leaf of $T$. Let $w'$ be the son of $w$ (note that since $G$ is bipartite, if a son of $v_3$ is not a leaf of $T$, then it has exactly one son), and $v_4$ be the father of $v_3$.
Let $e_1=v_3v_4$, $e_2=v_3v_2$, $e_3=v_2v_1$, $e_4=v_3w$ and $e_5=ww'$.

Each edge $e$ in $E[X] \cap E(T)$ is either incident to $v_3$ or is of the form $uu'$, where $u$ is a son of $v_3$ and $u'$ is the son of $u$.  Let  $\phi: E[X] \cup E[X,Y] \to E[X]$ be the mapping defined as follows:
 Cyclically order the edges of $E[X] - \{e_4\}$ incident to $v_3$ as $e'_1, e'_2, \ldots, e'_q$, and  $e'_1=e_2$. Let $\phi(e'_i) = e'_{i+1}$, where the indices are modulo $q$. Let $\phi(e_4)=e_5$. For each son $u$ of $v_3$ which is not a leaf of $T$, let $u'$ be the son of $u$, and let $\phi(uu')=uv_3$. In particular, $\phi(e_3)=e_2$ and $\phi(e_5)=e_4$.

Assume $e \in (E[X]  - E(T)) \cup E[X,Y]$. If $e=e_1$, then $\phi(e_1)=e_4$. Otherwise $e \in E(C)$, if $e$ is   to the left of $v_1$, then $\phi(e)$ is the tree edge incident to $e$ and to the right of $e$; if $e$ is to the right of $v_1$, then $\phi(e)$ is the tree edge incident to $e$ and to the left of $e$. (In particular, both cycle edges incident to $v_1$ are mapped to $e_3=v_1v_2$).

It is easy to verify that  for each $e \in E[X] \cup E[X,Y]$, $\phi(e)$ is   a source or a sink edge incident to $e \in E[X]$, and for each $e \in E[X]$, $|\phi^{-1}(e)| \le 2$. So it follows from Lemma \ref{lem-new} that  the subgraph $H=G[X] \cup E[X,Y]$ has a
$(0,2)$ matrix which contains no columns indexed by edges in $E[X,Y]$.

\bigskip
\noindent
{\bf Case 3:}  $v_3$ has only one son.
\bigskip

 Let $v_4$ be the father of $v_3$. If there is a son $w$ of $v_4$ with $d_T(w)\geq 3$, then  the depth of $w$ is  the same as $v_3$.
 We choose $w$ to play the role of $v_3$, and we are in   Cases 1 and 2.  Hence, we may assume that each son of $v_4$ has degree at most two in $T$.
 Let $X$ be the set consisting of $v_4$ and all the descendants of $v_4$ that have distance at most $2$ to $v_4$. Let $Y=V-X$. Orient the edges in $E[X] \cup E[X,Y]$ so that $v_4$ is a sink vertex, and all vertices at distance $2$ from $v_4$ are source vertices as in Figure \ref{fig3}.

\begin{figure}[h]
    \centering
    \scalebox{0.5}{\includegraphics{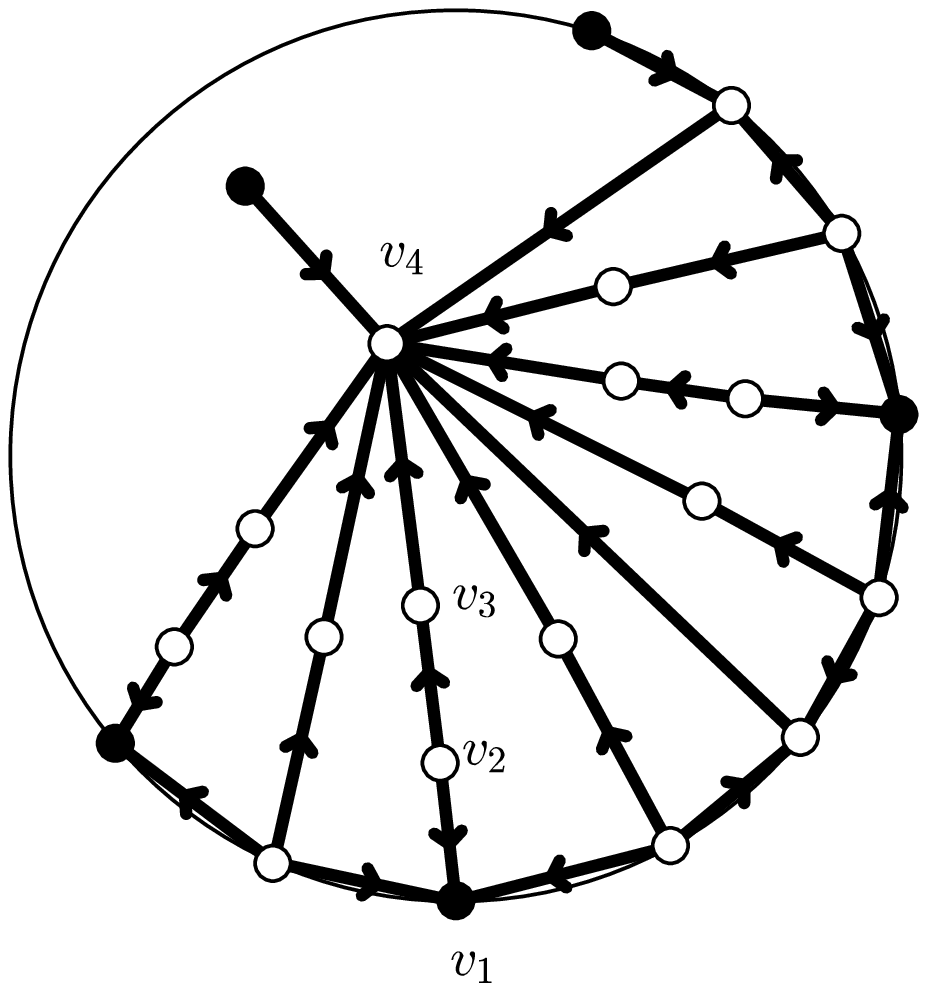}}
    \caption{ $X$  and $H$}
    \label{fig3}
\end{figure}

  In this orientation, all the edges in $E[X] \cap E(T)$ are source or sink edges. Similarly as in the previous case, it is easy to find a mapping $\phi: E[X] \cup E[X,Y] \to E[X]$
such that for each $e \in E[X] \cup E[X,Y]$, $\phi(e)$ is a source or a sink edge incident to $e$, and for each $e \in E[X]$,
$|\phi^{-1}(e)| \le 2$. By Lemma \ref{lem-new},    the subgraph $H=G[X] \cup E[X,Y]$ has a
$(0,2)$ matrix which contains no columns indexed by edges in $E[X,Y]$.

This completes the proof of Lemma \ref{bi}.
\end{proof}

It was proved in \cite{WZ2013}  that every graph $G$ has a \pnz
$(1,2)$-matrix. However, the following two conjectures which are weaker
than   Conjectures \ref{guess1} and  \ref{guess2}, respectively,  remain open.

\begin{conjecture}
\label{guess3} There is a constant $k$ such that every graph $G$  has a \pnz $(k,1)$-matrix.
\end{conjecture}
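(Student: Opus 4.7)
The plan is to attack this weakening of Conjecture \ref{guess1} by trading edge multiplicity (forced down to at most one) for vertex multiplicity (allowed up to $k$). A $(k,1)$-matrix $A_G(\eta)$ must satisfy $\eta(e)\in\{0,1\}$, $\eta(v)\le k$, and $\sum_z \eta(z)=|E(G)|$. If $r$ edges are selected, then $|E(G)|-r$ vertex-column slots must be distributed among $|V(G)|$ vertices, so the obvious numerical constraint is $|E(G)|-r\le k|V(G)|$; for sparse graphs this is automatic, but for dense graphs it forces large vertex multiplicities.

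First I would verify the conjecture for trees (where $|E|=|V|-1$), and then for $d$-degenerate graphs, by a variant of the ordering argument used in Corollary \ref{2deg}. Order the vertices $v_1,\ldots,v_n$ so that each $v_i$ has back-degree at most $d$. For each $v_i$, include its back-edge columns (each at most once) and pad the remaining count with copies of the vertex column $A_G(v_i)$. A leaf-to-root matching argument, combined with Observation \ref{ob2} to rewrite edge columns when it helps the sign analysis, should allow one to show that the resulting matrix has nonzero permanent, giving $k=d$ for $d$-degenerate graphs. The same template handles classes of bounded degeneracy such as planar or minor-closed families.

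The central difficulty, and presumably the reason the conjecture has remained open, is the dense regime, where $|E|$ far exceeds $k|V|$ for any fixed $k$, so edge columns cannot carry most of the count and many vertex columns must be repeated close to $k$ times. Since Observation \ref{ob2} forces $A_G(e)=A_G(u)+A_G(v)$, the column space of $A_G$ has only $|V|$ independent columns, and generic multisets of $|E|$ columns have vanishing permanent. A promising route is a modulo-$p$ reduction in the spirit of Lemma \ref{vertexcolumn}: work over $\mathbb{F}_p$ for an odd prime $p$, aim for $k\approx p-1$, and for non-bipartite components invoke Observation \ref{obs-oddballon} to replace a vertex column $A_G(v)$ by an integral combination of edge columns along an odd balloon at $v$, lowering the vertex multiplicity below $k$ without destroying non-vanishing modulo $p$. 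The hardest ingredient will be to exhibit such a construction uniformly over all graphs, or to identify a combinatorial structural reason (for instance, the existence of a spanning subgraph with controlled edge-density that carries most of the columns, together with a global flow/matching argument routing the residual counts onto vertex columns) guaranteeing that the permanent is nonzero once $k$ is fixed sufficiently large. Even the bipartite dense case, where Observation \ref{obs-oddballon} is unavailable and one must rely on bipartite structure analogous to Lemma \ref{lem-2deg2}, looks to be a nontrivial stumbling block, and is where I expect the argument to resist a short proof.
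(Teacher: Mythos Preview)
The statement you are attempting to prove is not proved in the paper at all: Conjecture~\ref{guess3} is explicitly listed among the problems that ``remain open,'' so there is no paper proof to compare your proposal against. Your write-up is not a proof either---it is a research outline that candidly concedes its own gaps (``the central difficulty, and presumably the reason the conjecture has remained open,'' ``is where I expect the argument to resist a short proof''). That is perfectly reasonable as a plan, but it should not be labelled a proof proposal for the conjecture.

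On the substance of your outline, two concrete issues are worth flagging. First, the $d$-degenerate case you sketch does \emph{not} yield a $(k,1)$-matrix with a uniform $k$: your own paragraph produces $k=d$, which grows with the degeneracy, so the class of all graphs (which contains $K_n$, with degeneracy $n-1$) is not covered. Second, the modulo-$p$ strategy via Observation~\ref{obs-oddballon} goes in the wrong direction for this conjecture. That observation lets you rewrite $2A_G(v)$ as a combination of edge columns, which is useful for \emph{eliminating} vertex columns in favour of edge columns---exactly what one does for $(0,p-1)$-matrices. For a $(k,1)$-matrix you need the opposite: keep each edge column to multiplicity at most one while absorbing the excess into vertex columns. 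Observation~\ref{ob2} does allow $A_G(e)=A_G(u)+A_G(v)$, so edge columns can be pushed onto vertex columns, but then a vertex of degree $\Delta$ may need to absorb up to $\Delta$ copies, again destroying any uniform bound on $k$. Neither tool, as stated, bridges the dense regime, and your proposal does not supply the missing idea.
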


\begin{conjecture}
\label{guess4} There is a constant $k$ such that every graph $G$
without isolated edges has a \pnz $(0,k)$-matrix.
\end{conjecture}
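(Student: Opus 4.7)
The plan is to establish a universal constant $k$ for which every graph without isolated edges has a \pnz $(0,k)$-matrix, by pushing the strategy of Sections 3 and 4 from generalized Halin graphs to arbitrary graphs. Since $A_G(\eta)$ is block-diagonal under disjoint union and the hypothesis excludes isolated edges, it suffices to handle each connected component of order at least three. I would split into two cases matching those in the paper: non-bipartite and bipartite.

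For a connected non-bipartite graph $G$, my strategy follows the template of Lemma \ref{vertexcolumn} combined with Corollary \ref{2deg}: pick an odd prime $p$, choose a vertex ordering with back-degrees $d_1,\dots,d_n$, and form the matrix consisting of $d_i$ copies of the vertex column $A_G(v_i)$. This yields $|\per(A)| = \prod_i d_i!$, which is nonzero modulo $p$ whenever $p$ exceeds the degeneracy of $G$. Since $G$ is non-bipartite and connected, every vertex is the root of an odd balloon, so Observation \ref{obs-oddballon} converts doubled vertex columns into integral combinations of edge columns, producing a \pnz $(0,p-1)$-matrix as in Lemma \ref{vertexcolumn}. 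The deficit of this approach is that $p$ depends on the degeneracy of $G$; to obtain a universal constant I would try to first peel off a spanning subgraph of bounded degeneracy carrying an odd cycle, and then fold the remaining edges back in via Observation \ref{obs-linear} and suitable odd balloons.

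For a connected bipartite graph $G$ on at least three vertices, the odd balloon trick fails, and I would extend the partition-based approach of Section 4. The target is, for some universal $k$, a partition $V(G) = X \cup Y$ with $G[Y]$ of degeneracy at most $k-1$, an orientation of $E[X]\cup E[X,Y]$, and a mapping $\phi : E[X] \cup E[X,Y] \to E[X]$ as in Lemma \ref{lem-new} such that every edge of $E[X]$ is hit at most $k-1$ times; combined with Lemma \ref{2degleft} (whose hypothesis that $G[Y]$ is $2$-degenerate would need to be relaxed to $(k-1)$-degenerate, using the same proof), this produces a \pnz $(0,k-1)$-matrix of $G$. A natural candidate for $X$ is obtained by a BFS procedure seeded at a high-degree vertex, with source/sink orientations arranged so that the tree edges near the seed become sinks or sources, generalizing Figures \ref{fig2} and \ref{fig3}.

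The principal obstacle in both cases is making $k$ uniform. In the non-bipartite case, general graphs have unbounded degeneracy, so Corollary \ref{2deg} alone fails to give a uniform prime $p$; a probabilistic argument extracting a sparse non-bipartite spanning subgraph whose vertex-column matrix has permanent coprime to $p$, together with Observation \ref{obs-linear} to absorb the deleted edges, seems to me the most promising route. In the bipartite case, the difficulty is uniformly bounding $|\phi^{-1}(e)|$ for graphs of arbitrarily large maximum degree or diameter; I would attempt a random partition of $V(G)$ followed by local corrections and a degree-based redistribution along alternating paths. These two uniform-bound issues are presumably the reason the conjecture remains open, and I would expect a full resolution to combine the algebraic Combinatorial Nullstellensatz framework of this paper with sparsification or probabilistic tools not developed here.
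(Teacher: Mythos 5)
This statement is Conjecture~\ref{guess4}, which the paper explicitly leaves open (``the following two conjectures \dots remain open''); there is no proof in the paper to compare against, and your proposal does not supply one either --- it is a research plan whose two decisive steps you yourself flag as unresolved. Concretely, in the non-bipartite case the machinery of Lemma~\ref{vertexcolumn} and Corollary~\ref{2deg} only yields a $(0,p-1)$-matrix for a prime $p$ exceeding the degeneracy of $G$, which is unbounded over all graphs, so no uniform $k$ comes out. Your proposed fix --- extracting a sparse non-bipartite spanning subgraph and ``folding the remaining edges back in via Observation~\ref{obs-linear}'' --- does not type-check: a \pnz $(0,k)$-matrix of $G$ must have one row for every edge of $G$ and $|E(G)|$ columns chosen from the columns of $A_G$, and the edge column $A_G(e)$ records incidences with \emph{all} edges of $G$ at the endpoints of $e$; the matrix $A_H$ of a spanning subgraph $H$ has different columns and fewer rows, so a \pnz matrix for $H$ gives no information about $A_G$, and Observation~\ref{obs-linear} offers no mechanism for absorbing the deleted edges.

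In the bipartite case the gap is equally fundamental. Lemma~\ref{lem-new} requires an orientation in which every edge in the image of $\phi$ is a source or sink edge, together with the multiplicity bound $|\phi^{-1}(e)|\le k-1$; the paper achieves this only by exploiting the very rigid tree-plus-outer-cycle structure of Halin graphs (the explicit local configurations around $v_3$ or $v_4$ in Cases 1--3 of Lemma~\ref{bi}). For a general bipartite graph there is no reason a partition $X\cup Y$ with $G[Y]$ of bounded degeneracy and such an orientation exists, and ``a random partition followed by local corrections and redistribution along alternating paths'' is not an argument --- no estimate is given, and dense bipartite graphs (e.g.\ $K_{n,n}$) are exactly where every candidate for $X$ forces either unbounded degeneracy in $G[Y]$ or unbounded multiplicities in $\phi$. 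Since both halves of your plan reduce to open problems, the proposal does not establish the conjecture.
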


{\bf{ Acknowledgement}}: This paper is finished while the 2nd author is visiting Professor Shinya Fujita at  Yokohama City University.  She thanks the hospitality of Professor Fujita and  Yokohama City University.

\end{document}